\newtheorem{thm}{Theorem}[section]
\newtheorem{lem}[thm]{Lemma}
\newtheorem{prop}[thm]{Proposition}
\newtheorem{ex}[thm]{Example}
\newtheorem{qu}[thm]{Question}
\newtheorem{con}[thm]{Conjecture}
\newtheorem*{prob*}{Open problem}
\theoremstyle{definition}
\newtheorem{defi}[thm]{Definition}
\theoremstyle{remark}
\newtheorem{rem}[thm]{Remark}
\newtheorem*{rem*}{Remark}
\DeclareMathOperator{\id}{id}
\DeclareMathOperator{\rad}{rad}
\newcommand{\kringel}{\mathbin{\raise1pt\hbox{$\scriptstyle\circ$}}}
\newcommand{\pkt}{\mathbin{\raise0pt\hbox{$\scriptstyle\bullet$}}}
\newcommand{\C}{\mathbb{C}}
\newcommand{\ad}{{\rm ad}}
\newcommand{\Ad}{\mathop{\rm Ad}}
\newcommand{\End}{{\rm End}}
\newcommand{\Der}{{\rm Der}}
\newcommand{\nil}{\mathop{\rm nil}}
\newcommand{\La}{\mathfrak{a}}
\newcommand{\Lb}{\mathfrak{b}}
\newcommand{\Lc}{\mathfrak{c}}
\newcommand{\Lf}{\mathfrak{f}}
\newcommand{\Lg}{\mathfrak{g}}
\newcommand{\Lh}{\mathfrak{h}}
\newcommand{\Lk}{\mathfrak{k}}
\newcommand{\Ll}{\mathfrak{l}}
\newcommand{\Ln}{\mathfrak{n}}
\newcommand{\Lr}{\mathfrak{r}}
\newcommand{\Ls}{\mathfrak{s}}
\newcommand{\Lt}{\mathfrak{t}}
\newcommand{\la}{\lambda}
\newcommand{\om}{\omega}
\newcommand{\ra}{\rightarrow}
\newcommand{\ck}{\checkmark}
\renewcommand{\phi}{\varphi}
\begin{document}


\title[Rigidity results]{Rigidity results for Lie algebras admitting a post-Lie algebra structure}
\author[D. Burde]{Dietrich Burde}
\author[K. Dekimpe]{Karel Dekimpe}
\author[M. Monadjem]{Mina Monadjem}
\address{Fakult\"at f\"ur Mathematik\\
Universit\"at Wien\\
Oskar-Morgenstern-Platz 1\\
1090 Wien \\
Austria}
\email{dietrich.burde@univie.ac.at}
\email{mina.monadjem@univie.ac.at}
\address{Katholieke Universiteit Leuven Kulak\\
E. Sabbelaan 53 bus 7657\\
8500 Kortrijk\\
Belgium}
\email{karel.dekimpe@kuleuven.be}
\date{\today}

\subjclass[2000]{Primary 17B30, 17D25}
\keywords{Post-Lie algebra, reductive Lie algebra, complete Lie algebra}

\begin{abstract}
We study rigidity questions for pairs of Lie algebras $(\Lg,\Ln)$ admitting a post-Lie algebra structure.
We show that  if $\Lg$ is semisimple and $\Ln$ is arbitrary, then we have rigidity in the sense
that $\Lg$ and $\Ln$ must be isomorphic. The proof uses a result on the decomposition of a Lie algebra 
$\Lg=\Ls_1\dotplus \Ls_2$ as the direct vector space sum of two semisimple subalgebras. We show that 
$\Lg$ must be semisimple and hence isomorphic to the direct Lie algebra sum $\Lg\cong \Ls_1\oplus \Ls_2$. 
This solves some open existence questions for post-Lie algebra structures on pairs of Lie algebras
$(\Lg,\Ln)$. We prove additional existence results for pairs $(\Lg,\Ln)$, where $\Lg$ is complete, and for
pairs, where $\Lg$ is reductive with $1$-dimensional center and $\Ln$ is solvable or nilpotent.
\end{abstract}

\maketitle

\section{Introduction}

Post-Lie algebra structures on pairs of Lie algebras naturally arise in several areas of mathematics and physics.
Some important examples of such areas are geometric structures on manifolds, affine actions on Lie groups, Rota-Baxter operators,
\'etale and prehomogeneous modules for Lie algebras, decompositions of Lie algebras, crystallographic groups,
operad theory, deformation theory, or quantum field theory. There is a large literature on
post-Lie algebra structures, see for example the papers \cite{VAL,BU41,BU44,BU51,BU59,BU64,BU65} and the 
references given therein. \\[0.2cm]
In the study of post-Lie algebra structures one often has to investigate Lie algebra decompositions, i.e., writing a
Lie algebra $\Lg$ as the vector space sum $\Lg=\La+\Lb$ of two subalgebras $\La$ and $\Lb$. How much does the structure of
$\La$ and $\Lb$ determine the structure of $\Lg$? Recently we have studied semisimple decompositions of Lie algebras, where both
$\La$ and $\Lb$ are semisimple, see \cite{BU74}. In general, such a Lie algebra need not be semisimple. Semisimple decompositions
are closely related to prehomogeneous modules for semisimple Lie algebras. \\[0.2cm]
In the present article we show that a Lie algebra $\Lg=\Ls_1\dotplus \Ls_2$, which is the {\em direct} vector space sum of
two semisimple subalgebras $\Ls_1$ and $\Ls_2$, is already semisimple and a direct Lie algebra sum $\Lg=\Lt_1\oplus \Lt_2$ with
$\Ls_i\cong \Lt_i$ for $i=1,2$. Here we use several results about decompositions of
Lie groups and Lie algebras from the papers \cite{ON62, ON69} of Onishchik. We prove that Onishchik's arguments also imply
the following result. Let $\Ls$ be a semisimple Lie algebra which is the sum of two semisimple subalgebras, e.g., $\Ls=\Ls_1+\Ls_2$. Then
the subalgebra $\Ls_1\cap \Ls_2$ is zero or semisimple - see Lemma $\ref{2.5}$.
This is not explicitly stated in \cite{ON62, ON69}, and we could not
find it in the literature. The result is also very useful to prove a strong rigidity result for post-Lie algebra structures
on pairs $(\Lg,\Ln)$, where $\Lg$ is semisimple - see Theorem $\ref{3.3}$. \\[0.2cm]
In section $4$ we complete our ``existence table'' for post-Lie algebra structures on pairs $(\Lg,\Ln)$ from \cite{BU65}.
We only leave one case open, namely where $\Lg$ is reductive and $\Ln$ is semisimple  - see Conjecture $\ref{3.5}$.
We use the correspondence to Rota-Baxter operators to construct post-Lie algebra structures on $(\Lg,\Ln)$, where
$\Lg$ is complete. \\[0.2cm]
In section $5$ we show that there are no post-Lie algebra structures on pairs $(\Lg,\Ln)$, where $\Lg$ is reductive with
$1$-dimensional center and $\Ln$ is solvable, non-nilpotent. However, if $\Ln$ is nilpotent then we cannot show this in
general. Even the case $\Lg=\mathfrak{gl}_n(\C)$ then is open in general. Here we can at least settle the case, where $\Ln$
is $2$-step nilpotent by reducing the question to left-symmetric structures on $\Lg=\mathfrak{gl}_n(\C)$ and using decompositions
of wedge products of simple $\mathfrak{sl}_n(\C)$-modules, which we had studied already in \cite{BU74}, section $3$.
The result is that there are no post-Lie algebra structures on $(\mathfrak{gl}_n(\C),\Ln)$ for all $n\ge 2$, where $\Ln$ is
$2$-step nilpotent and non-abelian - see Proposition $5.5$.

\section{Semisimple decompositions of Lie algebras}

We first recall the definition of a post-Lie algebra structure on a pair of Lie algebras $(\Lg,\Ln)$ over a field
$K$, see \cite{BU41}:

\begin{defi}\label{pls}
Let $\Lg=(V, [\, ,])$ and $\Ln=(V, \{\, ,\})$ be two Lie brackets on a vector space $V$ over
$K$. A {\it post-Lie algebra structure}, or {\em PA-structure} on the pair $(\Lg,\Ln)$ is a
$K$-bilinear product $x\cdot y$ satisfying the identities:
\begin{align}
x\cdot y -y\cdot x & = [x,y]-\{x,y\} \label{post1}\\
[x,y]\cdot z & = x\cdot (y\cdot z) -y\cdot (x\cdot z) \label{post2}\\
x\cdot \{y,z\} & = \{x\cdot y,z\}+\{y,x\cdot z\} \label{post3}
\end{align}
for all $x,y,z \in V$.
\end{defi}

Define by  $L(x)(y)=x\cdot y$ and $R(x)(y)=y\cdot x$ the left respectively right multiplication
operators of the algebra $A=(V,\cdot)$. By \eqref{post3}, all $L(x)$ are derivations of the Lie
algebra $(V,\{,\})$. Moreover, by \eqref{post2}, the left multiplication
\[
L\colon \Lg\ra \Der(\Ln)\subseteq \End (V),\; x\mapsto L(x)
\]
is a linear representation of $\Lg$. The right multiplication $R\colon V\ra V,\; x\mapsto R(x)$
is a linear map, but in general not a Lie algebra representation. \\
If $\Ln$ is abelian, then a post-Lie algebra structure on $(\Lg,\Ln)$ corresponds to a {\it pre-Lie algebra structure},
or left-symmetric structure on $\Lg$. In other words, if $\{x,y\}=0$ for all $x,y\in V$, then the conditions reduce to
\begin{align}
x\cdot y-y\cdot x & = [x,y] \label{pre1} \\
[x,y]\cdot z & = x\cdot (y\cdot z)-y\cdot (x\cdot z), \label{pre2}
\end{align}
i.e., $x\cdot y$ is a {\it pre-Lie algebra structure} on the Lie algebra $\Lg$. \\[0.2cm]
We will assume from now on that all Lie algebras are finite-dimensional and defined over the complex numbers. \\
Let $\Lg$ be a Lie algebra and $\Ls_1$ and $\Ls_2$ be two semisimple subalgebras
of $\Lg$. We write $\Lg=\Ls_1+\Ls_2$, if $\Lg$ is the vector space sum of $\Ls_1$ and $\Ls_2$, and
\[
\Lg=\Ls_1\dotplus \Ls_2,
\]
if $\Lg$ is the {\em direct} vector space sum of $\Ls_1$ and $\Ls_2$, i.e., satisfying
$\Ls_1\cap \Ls_2=0$. We use the sum with the dot to distinguish it from the direct sum of Lie ideals $\Lg=\Ls_1\oplus \Ls_2$.
In analogy to dinilpotent groups \cite{CST}, we have introduced in \cite{BU74}
the notion of a disemisimple Lie algebra.

\begin{defi}\label{2.1}
A Lie algebra $\Lg$ is called {\em disemisimple}, if it can be written as a vector space sum of two semisimple subalgebras
$\Ls_1$ and $\Ls_2$ of $\Lg$. In this case we write $\Lg=\Ls_1+\Ls_2$. If the vector space sum is direct, we say that $\Lg$ is
{\em strongly disemisimple}.
\end{defi}

For a Lie algebra $\Lg$ denote by $\rad(\Lg)$ the solvable radical of $\Lg$ and by $\nil(\Lg)$ the nilradical of $\Lg$.
In \cite{BU74} we have studied the structure of disemisimple Lie algebras. An important tool is the close relationship
to prehomogeneous $\Ls$-modules, where $\Ls$ is a Levi subalgebra. We have shown in \cite{BU74}, Theorem $3.7$, that the solvable
radical of a disemisimple Lie algebra with a simple Levi subalgebra is {\em abelian}.  
An elementary result was Lemma $2.3$ in \cite{BU74}, which is as follows.

\begin{lem}\label{2.2}
Let $\Lg$ be a disemisimple Lie algebra. Then $\Lg$ is perfect and the solvable radical of $\Lg$
coincides with the nilradical, i.e., $\rad(\Lg)=\nil(\Lg)$. 
\end{lem}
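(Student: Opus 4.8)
The plan is to prove both assertions of Lemma \ref{2.2} — perfectness of $\Lg$ and the equality $\rad(\Lg)=\nil(\Lg)$ — directly from the decomposition $\Lg=\Ls_1+\Ls_2$ with $\Ls_1,\Ls_2$ semisimple. For perfectness, first observe that $[\Lg,\Lg]$ contains $[\Ls_i,\Ls_i]=\Ls_i$ for $i=1,2$ since each $\Ls_i$ is semisimple and hence perfect. Therefore $[\Lg,\Lg]\supseteq \Ls_1+\Ls_2=\Lg$, so $\Lg=[\Lg,\Lg]$ and $\Lg$ is perfect. This is immediate and needs no further input.

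For the radical statement, I would argue as follows. Let $\Lr=\rad(\Lg)$ and $\Ln=\nil(\Lg)$; one always has $\Ln\subseteq\Lr$ and $[\Lr,\Lr]\subseteq\Ln$ (in fact $[\Lg,\Lr]\subseteq\Ln$ by a standard theorem), so it suffices to show $\Lr\subseteq[\Lr,\Lr]$, i.e.\ that $\Lr$ is itself perfect, which then forces $\Lr=[\Lr,\Lr]\subseteq\Ln\subseteq\Lr$. Since $\Lg$ is perfect we have $\Lg=[\Lg,\Lg]$, and applying this together with $\Lg/\Lr$ being semisimple: the quotient $\Lg/\Lr$ is semisimple, so by Levi's theorem $\Lg=\Ls\dotplus\Lr$ for a Levi subalgebra $\Ls$. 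The key point is that $\Lg/\Lr$ is \emph{also} disemisimple — it is the sum of the images of $\Ls_1$ and $\Ls_2$, which are semisimple (being quotients of semisimple algebras, hence again semisimple, or zero). Now $[\Lg,\Lg]=\Lg$ gives, modulo the Levi decomposition, that the $\Lr$-component of $[\Lg,\Lg]$ already exhausts $\Lr$; more precisely, writing $\Lg=\Ls\oplus\Lr$ as vector spaces one computes $[\Lg,\Lg]=[\Ls,\Ls]+[\Ls,\Lr]+[\Lr,\Lr]=\Ls+([\Ls,\Lr]+[\Lr,\Lr])$, and since this equals $\Lg=\Ls\oplus\Lr$ we get $\Lr=[\Ls,\Lr]+[\Lr,\Lr]$. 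The subtlety is then to upgrade this to $\Lr=[\Lr,\Lr]$, equivalently to show the $\Ls$-module $\Lr$ has no trivial submodule contributing — here one uses that $\Lr$ is a nilpotent-by-something whose abelianization $\Lr/[\Lr,\Lr]$ is a semisimple $\Ls$-module with no trivial constituents, because any trivial constituent would produce a nonzero central ideal of $\Lg$ contradicting $[\Lg,\Lg]=\Lg$ (a perfect Lie algebra has no nonzero abelian direct summand, but more work is needed for a quotient).

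I expect the main obstacle to be precisely this last step: passing from ``$\Lr$ is generated as an ideal by $[\Lg,\Lg]\cap\Lr$'' to ``$\Lr=[\Lr,\Lr]$''. The clean way is: since $\Lg$ is perfect, $H_1(\Lg)=0$, and one has the five-term exact sequence or simply the observation that $\Lg/[\Lr,\Lr]$ is perfect with abelian radical $\Lr/[\Lr,\Lr]$; a perfect Lie algebra whose radical is abelian and which acts on that radical via a Levi subalgebra must have that radical equal to a sum of nontrivial irreducible modules, so in particular if we further knew the radical were \emph{central} it would be zero. To close the gap without circularity I would instead invoke the standard fact that for \emph{any} Lie algebra $\Lg$ one has $\rad([\Lg,\Lg]) = [\Lg,\Lg]\cap\rad(\Lg)$ and $[\Lg,\rad(\Lg)]\subseteq\nil(\Lg)$ together with $[\Lg,\Lg]=\Lg$: these give $\rad(\Lg)=\rad([\Lg,\Lg])$ and then, applying the same identities to the perfect algebra $\Lg$ repeatedly, $\rad(\Lg)=[\Lg,\rad(\Lg)]\subseteq\nil(\Lg)$, which combined with $\nil(\Lg)\subseteq\rad(\Lg)$ yields the claim. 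Thus the whole argument reduces to: (i) perfectness from $\Ls_i=[\Ls_i,\Ls_i]$, and (ii) the implication ``$\Lg$ perfect $\Rightarrow$ $\rad(\Lg)=[\Lg,\rad(\Lg)]=\nil(\Lg)$'', the latter being a known consequence of the structure theory of Lie algebras in characteristic zero. I would present step (i) in one line and then cite or briefly reprove (ii), flagging that disemisimplicity enters only through perfectness.
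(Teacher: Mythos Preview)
The paper does not actually prove this lemma; it is quoted verbatim as Lemma~2.3 of \cite{BU74} and cited without argument. So there is no ``paper's proof'' to compare against, and what matters is whether your argument stands on its own.

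Your final argument is correct and is presumably close to what \cite{BU74} does: perfectness is immediate from $[\Ls_i,\Ls_i]=\Ls_i$, and then from the Levi decomposition $\Lg=\Ls\dotplus\Lr$ together with $\Lg=[\Lg,\Lg]$ one reads off $\Lr=[\Ls,\Lr]+[\Lr,\Lr]=[\Lg,\Lr]$; the standard characteristic-zero fact $[\Lg,\rad(\Lg)]\subseteq\nil(\Lg)$ then gives $\rad(\Lg)\subseteq\nil(\Lg)$ and hence equality. Your closing remark that disemisimplicity enters only through perfectness is exactly right.

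However, the exposition takes a long detour before arriving there. The first strategy --- showing $\Lr=[\Lr,\Lr]$, i.e.\ that the radical is perfect --- is not just hard to complete but outright false: a solvable perfect Lie algebra is zero, so this would force $\rad(\Lg)=0$, contradicting the paper's own example $\Ls\Ll_n(\C)\ltimes V(n)$ of a non-semisimple disemisimple Lie algebra. You catch this and change course, but the intermediate paragraphs about $H_1(\Lg)$, trivial constituents, and five-term exact sequences are unnecessary once you have the two-line argument at the end. Keep only step~(i) and the clean version of step~(ii).
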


For strongly disemisimple Lie algebras one can show more.

\begin{lem}\label{2.3}
Let $\Lg$ be a strongly disemisimple Lie algebra with $\Lg=\Ls_1\dotplus \Ls_2$. 
Then neither $\Ls_1$ nor $\Ls_2$ can be a Levi subalgebra of $\Lg$.
\end{lem}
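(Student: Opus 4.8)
The plan is to argue by contradiction: suppose $\Ls_1$ is a Levi subalgebra of $\Lg$. Then $\Lg = \Ls_1 \dotplus \rad(\Lg)$ as vector spaces, and since $\dim \Ls_1 + \dim \Ls_2 = \dim \Lg = \dim \Ls_1 + \dim \rad(\Lg)$ (the sum being direct), we get $\dim \Ls_2 = \dim \rad(\Lg)$. Now I would invoke Lemma~\ref{2.2}: since $\Lg$ is strongly disemisimple it is in particular disemisimple, so $\rad(\Lg) = \nil(\Lg)$ is a nilpotent ideal. The strategy is to show that $\Ls_2$, being a semisimple subalgebra whose dimension equals that of the nilradical and which complements the Levi factor $\Ls_1$, forces a contradiction.

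The key step is to compare $\Ls_2$ with $\rad(\Lg)$ using the Levi decomposition. Since $\Ls_1$ is a Levi subalgebra, the projection $\pi\colon \Lg \to \Lg/\rad(\Lg) \cong \Ls_1$ restricts to an isomorphism on $\Ls_1$. Consider $\pi|_{\Ls_2}\colon \Ls_2 \to \Ls_1$. Because the decomposition $\Lg = \Ls_1 \dotplus \Ls_2$ is direct and $\ker \pi = \rad(\Lg)$ has the same dimension as $\Ls_2$, I expect either that $\pi|_{\Ls_2}$ is the zero map — which would give $\Ls_2 \subseteq \rad(\Lg)$, and hence $\Ls_2 = \rad(\Lg)$ by the dimension count, contradicting that $\Ls_2$ is semisimple while $\rad(\Lg)$ is solvable (indeed nilpotent) and nonzero — or that $\pi|_{\Ls_2}$ has nontrivial kernel $\Ls_2 \cap \rad(\Lg)$. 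In the latter case $\Ls_2 \cap \rad(\Lg)$ is an ideal of the semisimple algebra $\Ls_2$ that is also contained in the solvable ideal $\rad(\Lg)$, hence is both semisimple and solvable, so it is $0$; then $\pi|_{\Ls_2}$ is injective, forcing $\dim \Ls_2 \le \dim \Ls_1$ while simultaneously $\dim \rad(\Lg) = \dim \Ls_2$, so $\Lg = \Ls_1 \dotplus \Ls_2$ with both summands complementary to $\rad(\Lg)$ would make $\Ls_2$ a second Levi subalgebra of dimension $\dim\rad(\Lg)$; but a Levi subalgebra has dimension $\dim\Lg - \dim\rad(\Lg)$, giving $\dim\rad(\Lg) = \dim\Lg - \dim\rad(\Lg)$, i.e. $2\dim\rad(\Lg) = \dim\Lg$, and then $\Ls_1 \cong \Ls_2 \cong \Lg/\rad(\Lg)$ are both Levi, both of dimension equal to $\dim\rad(\Lg)$.

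At this point the cleanest finish is to rule out $\rad(\Lg) \ne 0$ directly. If $\rad(\Lg) \ne 0$ then, since $\rad(\Lg) = \nil(\Lg)$ is a nonzero nilpotent ideal, its center $\Lz(\nil(\Lg))$ is a nonzero abelian ideal $\La$ of $\Lg$ on which the Levi factor $\Ls_1$ acts, and $\Ls_2$ meets $\rad(\Lg)$ trivially as shown. I would then exploit that $\Ls_2$ maps isomorphically onto $\Lg/\rad(\Lg)$ to conclude $\Lg = \Ls_2 \oplus \rad(\Lg)$ is \emph{also} a Levi decomposition, so both $\Ls_1$ and $\Ls_2$ are Levi subalgebras and, by Malcev's theorem, conjugate under an inner automorphism $\exp(\ad z)$ with $z \in \nil(\Lg)$. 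But then $\Lg = \Ls_1 \dotplus \Ls_2 = \Ls_1 \dotplus \exp(\ad z)(\Ls_1)$, and since $\exp(\ad z)$ fixes $\Ls_1$ modulo $\rad(\Lg)$ it actually maps $\Ls_1$ into $\Ls_1 + \rad(\Lg) = \Lg$ with $\exp(\ad z)(\Ls_1) \cap \Ls_1$ of positive dimension whenever $z \ne 0$ is suitably chosen — more carefully, $\Ls_1 + \Ls_2 = \Ls_1 + \exp(\ad z)(\Ls_1)$ is contained in $\Ls_1 + [\,\Ls_1, \rad(\Lg)\,] \subsetneq \Lg$ unless $[\Ls_1, \rad(\Lg)] = \rad(\Lg)$, which combined with $\rad(\Lg)$ nilpotent and the representation theory of the semisimple $\Ls_1$ still allows a fixed vector; the dimension count $\dim(\Ls_1 + \exp(\ad z)\Ls_1) = \dim\Lg$ then forces $z = 0$, so $\Ls_1 = \Ls_2$, contradicting $\Ls_1 \cap \Ls_2 = 0$ with $\Ls_1 \ne 0$ (as $\Lg$ is not solvable, being perfect by Lemma~\ref{2.2}).

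The main obstacle I anticipate is the last step — turning ``$\Ls_1$ and $\Ls_2$ are conjugate Levi subalgebras summing directly to $\Lg$'' into an outright contradiction. The dimension bookkeeping alone ($2\dim\rad(\Lg) = \dim\Lg$, $\dim\Ls_1 = \dim\Ls_2 = \dim\rad(\Lg)$) is consistent, so the contradiction must come from the \emph{direct}-sum hypothesis interacting with Malcev conjugacy: a nontrivial inner automorphism $\exp(\ad z)$, $z \in \nil(\Lg)$, cannot move $\Ls_1$ to a complementary semisimple subalgebra because $\exp(\ad z) - \id$ has image in $\rad(\Lg)$, so $\exp(\ad z)(\Ls_1) \subseteq \Ls_1 + \rad(\Lg)$ and $\Ls_1 \cap \exp(\ad z)(\Ls_1)$ projects onto all of $\Lg/\rad(\Lg)$, hence has dimension $\ge \dim\Ls_1 > 0$, contradicting $\Ls_1 \cap \Ls_2 = 0$. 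Making this observation precise — that the intersection of a Levi subalgebra with any of its Malcev-conjugates surjects onto the semisimple quotient — is the crux, and it is exactly what rules out $\Ls_1$ (or symmetrically $\Ls_2$) being a Levi subalgebra.
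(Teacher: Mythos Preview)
Your proposal has a genuine gap at the crux. The claim that ``$\Ls_1 \cap \exp(\ad z)(\Ls_1)$ projects onto all of $\Lg/\rad(\Lg)$'' is false. Since $(\exp(\ad z)-\id)$ maps into $\rad(\Lg)$, an element $s\in\Ls_1$ lies in $\Ls_1\cap\exp(\ad z)(\Ls_1)$ if and only if $s=\exp(\ad z)(s')$ for some $s'\in\Ls_1$; projecting to $\Lg/\rad(\Lg)$ gives $s=s'$, so $s$ must be \emph{fixed} by $\exp(\ad z)$. Thus $\Ls_1\cap\Ls_2$ is precisely the fixed-point set of $\exp(\ad z)$ in $\Ls_1$ (equivalently, when $\rad(\Lg)$ is abelian, the centralizer of $z$ in $\Ls_1$), which is generically a \emph{proper} subalgebra of $\Ls_1$ --- not something that surjects onto $\Lg/\rad(\Lg)$. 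So the contradiction you want does not follow from projection alone; you need a separate reason why this fixed-point set is nonzero.

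There is also an earlier unjustified step: you assert that $\Ls_2$ is a second Levi subalgebra, i.e.\ that $\pi|_{\Ls_2}$ is an isomorphism onto $\Lg/\rad(\Lg)$. You only proved injectivity; surjectivity would force $\dim\Ls_2=\dim\Ls_1$, whereas the hypotheses only give $\dim\Ls_2=\dim\rad(\Lg)$. The paper sidesteps both issues: it uses Levi--Malcev to conjugate $\Ls_2$ \emph{into} $\Ls_1$ (valid for any semisimple subalgebra, no surjectivity needed), and then invokes Lemma~4.1 of \cite{BU44}, which says that a semisimple Lie algebra $\Ls_2$ acting on a module $M$ with $\dim M=\dim\Ls_2$ has, for every $z\in M$, a nonzero annihilator $x\in\Ls_2$ with $x\cdot z=0$. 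That nontrivial fact is exactly what produces a nonzero element of $\Ls_1\cap\Ls_2$ and yields the contradiction --- and it is the missing ingredient in your argument.
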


\begin{proof}
Assume that $\Ls_1$ is a Levi subalgebra of $\Lg$ with $\Lg=\Ls_1\ltimes \rad(\Lg)$. We have
\[
\dim (\rad(\Lg))=\dim(\Lg)-\dim(\Ls_1)=\dim(\Ls_2).
\]
By Levi-Malcev there exists an element $z\in \rad(\Lg)$ such that $e^{\ad(z)}$ maps $\Ls_2$ to a subalgebra of $\Ls_1$.
Then $M=\rad(\Lg)$ is an $\Ls_2$-module with $\dim(M)=\dim(\Ls_2)$. Thus
we can apply Lemma $4.1$ of \cite{BU44}. It gives a nonzero $x\in \Ls_2$ such that $x.z=0$. So the Lie bracket
in $\Lg$ is $[x,z]=0$, so that $e^{\ad(z)}(x)=x$. This implies that also $x\in \Ls_1$, and hence $x\in \Ls_1\cap \Ls_2=0$.
We obtain $x=0$, which is a contradiction. Hence $\Ls_1$ cannot be a Levi subalgebra. The same applies to $\Ls_2$.
\end{proof}  

\begin{lem}\label{2.4}
Let $\Lg$ be a strongly disemisimple Lie algebra with $\Lg=\Ls_1\dotplus \Ls_2$, and $\Ls$
a Levi subalgebra of $\Lg$ with $\Ls_1\subseteq \Ls$. Then there exists a semisimple subalgebra $\Ls_3$ of $\Lg$
such that $\Ls=\Ls_1+\Ls_3$ and $\dim (\Ls_1\cap \Ls_3)=\dim (\rad(\Lg))$.
\end{lem}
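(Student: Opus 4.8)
The plan is to work in the semisimple quotient $\ov{\Lg}:=\Lg/\rad(\Lg)$ and to produce $\Ls_3$ by pulling a suitable complement of $\Ls_1$ back through the Levi subalgebra $\Ls$.

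First I would record that, since $\Ls_1$ and $\Ls_2$ are semisimple while $\rad(\Lg)$ is solvable, each intersection $\Ls_i\cap\rad(\Lg)$ ($i=1,2$) is simultaneously semisimple and solvable, hence zero. Writing $\pi\colon\Lg\to\ov{\Lg}$ for the canonical projection, it follows that $\pi$ restricts to injective Lie algebra homomorphisms on $\Ls_1$ and on $\Ls_2$, so $\pi(\Ls_1)$ and $\pi(\Ls_2)$ are semisimple subalgebras of $\ov{\Lg}$ with $\dim\pi(\Ls_1)=\dim\Ls_1$ and $\dim\pi(\Ls_2)=\dim\Ls_2$. Likewise $\Ls\cap\rad(\Lg)=0$, and since $\dim\Ls=\dim\Lg-\dim\rad(\Lg)=\dim\ov{\Lg}$, the restriction $\pi|_\Ls\colon\Ls\to\ov{\Lg}$ is a Lie algebra isomorphism.

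Next comes the dimension bookkeeping. Since $\Lg$ is strongly disemisimple we have $\Lg=\Ls_1+\Ls_2$ with the sum direct, so $\dim\Lg=\dim\Ls_1+\dim\Ls_2$ and hence $\dim\ov{\Lg}=\dim\Ls_1+\dim\Ls_2-\dim\rad(\Lg)$. Applying $\pi$ to $\Lg=\Ls_1+\Ls_2$ gives $\ov{\Lg}=\pi(\Ls_1)+\pi(\Ls_2)$, and therefore
\[
\dim\bigl(\pi(\Ls_1)\cap\pi(\Ls_2)\bigr)=\dim\pi(\Ls_1)+\dim\pi(\Ls_2)-\dim\ov{\Lg}=\dim\rad(\Lg).
\]

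Finally I would set $\Ls_3:=(\pi|_\Ls)^{-1}\bigl(\pi(\Ls_2)\bigr)$, a subalgebra of $\Ls\subseteq\Lg$. As $\pi|_\Ls$ is an isomorphism, $\Ls_3\cong\pi(\Ls_2)\cong\Ls_2$ is semisimple; because $\Ls_1\subseteq\Ls$ we have $(\pi|_\Ls)^{-1}(\pi(\Ls_1))=\Ls_1$, so transporting $\ov{\Lg}=\pi(\Ls_1)+\pi(\Ls_2)$ back through $\pi|_\Ls$ yields $\Ls=\Ls_1+\Ls_3$; and transporting the intersection back gives $\Ls_1\cap\Ls_3=(\pi|_\Ls)^{-1}\bigl(\pi(\Ls_1)\cap\pi(\Ls_2)\bigr)$, of dimension $\dim\rad(\Lg)$. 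This is exactly the assertion. The argument is almost entirely linear algebra once one adopts this viewpoint; the one point I would flag as the crux is that \emph{both} $\Ls_1$ and $\Ls_2$ inject into $\ov{\Lg}$ (which is what forces the dimension count to close up), and I would stress that this rests only on the elementary ``semisimple $\cap$ solvable $=0$'' fact and requires no conjugacy theorem for Levi or semisimple subalgebras.
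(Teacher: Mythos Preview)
Your proof is correct and takes a genuinely different route from the paper's. The paper constructs $\Ls_3$ via the Levi--Malcev conjugacy theorem: it picks $z\in\rad(\Lg)$ with $\phi=e^{\ad(z)}$ carrying $\Ls_2$ into $\Ls$, sets $\Ls_3=\phi^{-1}(\Ls_2)$, and then works with explicit elements to show $\Lg=\Ls_1+\Ls_3+\rad(\Lg)$ and hence $\Ls=\Ls_1+\Ls_3$. Your argument instead passes to the quotient $\ov\Lg=\Lg/\rad(\Lg)$, observes that $\pi|_\Ls$ is an isomorphism and that $\pi|_{\Ls_2}$ is injective, and simply defines $\Ls_3=(\pi|_\Ls)^{-1}(\pi(\Ls_2))$; the two conclusions then follow from pure linear algebra. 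This is cleaner and avoids any conjugacy theorem. In fact the two constructions yield the \emph{same} $\Ls_3$, since $\phi$ reduces to the identity modulo $\rad(\Lg)$, so $\pi(\phi^{-1}(\Ls_2))=\pi(\Ls_2)$.

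One caveat worth flagging: the paper's proof of Theorem~\ref{2.6} explicitly reuses the automorphism $\phi=e^{\ad(z)}$ with $\phi(\Ls_3)=\Ls_2$ produced in their proof of Lemma~\ref{2.4}. Your argument does not furnish this $\phi$, so if one adopts your proof of the lemma, the existence of such a $\phi$ would have to be reinstated (via Levi--Malcev) at the point where Theorem~\ref{2.6} needs it. The lemma as stated, however, is fully established by your approach.
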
  

\begin{proof}
By Levi-Malcev there exists an element $z\in \rad(\Lg)$ such that, with $\phi=e^{\ad(z)}$, we have
$\Ls_2=\phi(\Ls_3)$ for some semisimple subalgebra $\Ls_3\subseteq \Ls$. We have $\Ls_3=\phi^{-1}(\Ls_2)$ with
$\phi^{-1}=e^{\ad(-z)}$. Let $x\in \Lg$. Then there exist $s_1\in \Ls_1$ and $s_2\in \Ls_2$ such that $x=s_1+s_2$.
For all $y\in \Lg$ we have
\[
y-\phi^{-1}(y)=y-y-\sum_{k=1}^{\infty} \frac{\ad(-z)^k}{k!}(y) \in \rad(\Lg).
\]
Hence we have, for $y=s_2$,
\[
x=s_1+s_2=s_1+\phi^{-1}(s_2)+(s_2-\phi^{-1}(s_2))\in \Ls_1+\Ls_3+\rad(\Lg).
\]
So we have
\[
\Lg\subseteq \Ls_1+\Ls_3+\rad(\Lg)\subseteq \Lg,
\]
and hence equality. Now let $x\in \Ls$. Then $x=s_1+s_3+r$ for some $s_1\in \Ls_1$, $s_3\in \Ls_3$ and $r\in \rad(\Lg)$.
It follows that $r=x-s_1-s_3\in \Ls$, so that $r=0$ because of $\rad(\Lg)\cap \Ls=0$.
Hence we have $\Ls\subseteq \Ls_1+\Ls_3\subseteq \Ls$, and $\Ls=\Ls_1+\Ls_3$. For the dimensions we have
\[
\dim(\Ls)=\dim(\Ls_1)+\dim(\Ls_3)-\dim (\Ls_1\cap \Ls_3).
\]
On the other hand we have $\Ls_1\dotplus \Ls_2=\Ls\ltimes \rad(\Lg)$ and $\dim(\Ls_2)=\dim(\Ls_3)$, so that
\[
\dim (\Ls)=\dim (\Ls_1)+\dim (\Ls_3)-\dim (\rad(\Lg)).
\]
So we have $\dim (\Ls_1\cap \Ls_3)=\dim (\rad(\Lg))$.
\end{proof}

The following lemma is a consequence of results and arguments from the two papers \cite{ON62} and \cite{ON69} by Onishchick.

\begin{lem}\label{2.5}
Let $\Ls$ be a semisimple and disemisimple Lie algebra with $\Ls=\Ls_1+\Ls_2$. Then the 
subalgebra $\Ls_1\cap \Ls_2$ is zero or semisimple.
\end{lem}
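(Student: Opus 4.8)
The plan is to invoke Onishchik's structural machinery on factorizations of semisimple Lie groups, translated to the Lie algebra level. Let $S$ be the simply connected Lie group with Lie algebra $\Ls$, and let $S_1, S_2 \le S$ be the connected subgroups with Lie algebras $\Ls_1, \Ls_2$. The hypothesis $\Ls = \Ls_1 + \Ls_2$ says exactly that $S = S_1 S_2$ is a factorization of $S$, and the intersection $\Ls_1 \cap \Ls_2$ is the Lie algebra of $S_1 \cap S_2$. Onishchik classified the factorizations $S = S_1 S_2$ of a connected semisimple Lie group: up to local isomorphism and permutation of factors, each such factorization decomposes as a product of factorizations of the simple factors of $S$, and the simple factorizations appear on an explicit finite list (the $B_n = A_{n-1} + D_n$ type factorizations of orthogonal groups, the $SU(2n) = Sp(n) \cdot SU(2n-1)$ and related ones, the $G_2$ factorization, etc.). The key point I would extract is that in every entry of this list, the factors $\Ls_1, \Ls_2$ are themselves reductive with the property that the intersection $\Ls_1 \cap \Ls_2$ is reductive in $\Ls$; combined with the fact that $\Ls_1 \cap \Ls_2$ is a subalgebra of the semisimple algebra $\Ls$ whose image under each projection-type argument is controlled, one concludes it is reductive, and then one must upgrade ``reductive'' to ``semisimple or zero''.

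To carry this out concretely I would proceed in steps. First, reduce to the case where $\Ls$ is simple: if $\Ls = \Lt_1 \oplus \cdots \oplus \Lt_k$ with $\Lt_j$ simple, Onishchik's decomposition theorem for factorizations lets us write $\Ls_i$ compatibly with this decomposition (after conjugation), so that $\Ls_1 \cap \Ls_2$ decomposes as a direct sum of the corresponding intersections inside each $\Lt_j$; hence it suffices to treat each simple factor. Second, for $\Ls$ simple, either the factorization is trivial (one of $\Ls_1, \Ls_2$ equals $\Ls$, in which case $\Ls_1 \cap \Ls_2$ is the smaller one, hence semisimple) or it is one of the finitely many nontrivial simple factorizations. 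Third, go through that finite list and check in each case that $\Ls_1 \cap \Ls_2$ is semisimple or zero — this is a finite, explicit verification: e.g. for $\mathfrak{so}_{2n} = \mathfrak{sl}_n \dotplus (\text{stabilizer stuff})$ one computes the intersection directly, and similarly for the $Sp$, $G_2$, and spinor cases. Fourth, assemble: a direct sum of semisimple algebras and zeros is semisimple or zero.

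Alternatively — and this is probably the cleaner route if one wants to avoid quoting the full classification — I would use the following observation of Onishchik: if $\Ls = \Ls_1 + \Ls_2$ with all three algebras reductive, then the ``index'' data he attaches to the factorization forces $\Ls_1 \cap \Ls_2$ to be reductive in $\Ls$, and moreover $\mathrm{rad}(\Ls_1 \cap \Ls_2)$, being the radical of a subalgebra reductive in a semisimple algebra, would have to be a torus; but a torus $\Lt \subseteq \Ls_1 \cap \Ls_2$ acts on $\Ls / (\Ls_1 \cap \Ls_2)$, and a dimension/weight count using $\dim \Ls = \dim \Ls_1 + \dim \Ls_2 - \dim(\Ls_1\cap\Ls_2)$ together with the semisimplicity of $\Ls_1$ and $\Ls_2$ individually shows that no nonzero central torus can survive in the intersection. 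Either way, the main obstacle is the same: the heart of the argument is really Onishchik's theorem that reductive-by-reductive factorizations of semisimple Lie algebras are extremely constrained, and the work is in (a) citing or re-deriving precisely the form of that theorem one needs, and (b) the case-by-case check that the intersection has no abelian part. I expect step (b), the explicit inspection of the simple factorizations, to be the most laborious part, while the conceptual crux is making sure the reduction to simple $\Ls$ in step one genuinely respects the intersection subalgebra.
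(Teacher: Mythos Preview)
Your proposal and the paper both lean on Onishchik, but the routes diverge substantially, and your version has a real gap.

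The paper avoids any case-by-case inspection of the simple factorizations. Instead it (i) passes from the complex factorization $\Ls=\Ls_1+\Ls_2$ to a factorization $\Lk=\Lk_1+\Lk_2$ of compact real forms, using Onishchik's Corollary~3.1 in \cite{ON69}, where $\Lk_1,\Lk_2$ are compact forms of $\Ad(x)(\Ls_1)$ and $\Ad(y)(\Ls_2)$ for certain $x,y\in G_{\Ls}$; (ii) cites the already-proved fact from \cite{ON62} that in the compact setting $\Lk_1\cap\Lk_2$ is zero or semisimple; (iii) complexifies to conclude that $\Ad(x)(\Ls_1)\cap\Ad(y)(\Ls_2)$ is zero or semisimple; and (iv) proves that $\Ls_1\cap\Ls_2\cong \Ad(x)(\Ls_1)\cap\Ad(y)(\Ls_2)$ by writing $x=ba$ with $b\in G_{\Ls_2}$, $a\in G_{\Ls_1}$ (using $G_{\Ls}=G_{\Ls_2}G_{\Ls_1}$) and then $b^{-1}y=a'b'$ similarly. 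No classification, no tables.

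The gap in your argument is precisely step~(iv), which you gloss over. In your reduction to simple factors you write ``after conjugation'' one can make $\Ls_1$ and $\Ls_2$ compatible with the decomposition $\Ls=\Lt_1\oplus\cdots\oplus\Lt_k$. But Onishchik's decomposition results may hand you \emph{different} conjugating elements for $\Ls_1$ and for $\Ls_2$, and conjugating the two factors by distinct elements can change the isomorphism type of the intersection --- a priori $\Ad(x)(\Ls_1)\cap\Ad(y)(\Ls_2)$ need not be isomorphic to $\Ls_1\cap\Ls_2$. That this \emph{does} hold is exactly what the paper's factorization trick in~(iv) establishes, and it is the genuinely nontrivial point. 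Your reduction ``so that $\Ls_1\cap\Ls_2$ decomposes as a direct sum of the corresponding intersections inside each $\Lt_j$'' does not follow without it. Your alternative route via a torus/weight count is too vague to assess; even granting that $\Ls_1\cap\Ls_2$ is reductive in $\Ls$, the sketched dimension argument does not obviously exclude a nonzero central torus. If you want to salvage your approach, you should supply the conjugation-invariance argument explicitly --- at which point you may as well use it in the paper's way and bypass the classification entirely.
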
  

\begin{proof}
Let $G_{\Ls}$ be the connected algebraic group corresponding to $\Ls$ and let $G_{\Ls_1}$ and 
$G_{\Ls_2}$ be the connected algebraic subgroups of $G_{\Ls}$ corresponding to the subalgebras 
$\Ls_1$ and $\Ls_2$. In \cite[pages 522-523]{ON69} in the proof of Theorem 3.1 in the case of 
reductive algebraic decompositions over $\C$ it is shown that $G_{\Ls} = G_{\Ls_1} G_{\Ls_2}$ 
(and so also $G_{\Ls} = G_{\Ls_2} G_{\Ls_1}$).\\
From \cite[Corollary 3.1]{ON69} we then get that there exists a real compact form $\Lk$ of $\Ls$ 
and real compact forms $\Lk_1$ and $\Lk_2$ of the Lie algebras $\Ad(x)(\Ls_1)$ and $\Ad(y)(\Ls_2)$ 
corresponding to some conjugates $xG_{\Ls_1} x^{-1}$ and $yG_{\Ls_2}y^{-1}$ of $G_{\Ls_1}$ and 
$G_{\Ls_2}$ such that $\Lk= \Lk_1 + \Lk_2$. In \cite{ON62}, page $18$, after Corollary $2$ it is 
shown that for the decomposition $\Lk=\Lk_1+\Lk_2$ of compact forms we have that $\Lk_1\cap \Lk_2$ 
is zero or {\em semisimple} (Onishchik writes $U_0=G'_0\cap G''_0$ for this intersection). 
From this it follows that the complexification $( \Lk_1\cap \Lk_2) ^\C = \Lk_1^\C\cap \Lk_2^\C
= \Ad(x)(\Ls_1) \cap \Ad(y)(\Ls_2)$ is also zero or semisimple. \\
We now claim that $\Ls_1 \cap \Ls_2$ is isomorphic to $\Ad(x)(\Ls_1) \cap \Ad(y)(\Ls_2)$. 
To see this, first write $x=ba$ with $b \in G_{\Ls_2}$ and $a \in G_{\Ls_1}$ 
(recall that $G_\Ls= G_{\Ls_2}G_{\Ls_1}$). Then 
\begin{eqnarray*}
\Ad(x)(\Ls_1) \cap \Ad(y)\Ls_2 & =  &\Ad(b) ( \Ad(a)(\Ls_1) \cap \Ad (b^{-1}y) (\Ls_2))\\
& = & \Ad(b) (\Ls_1 \cap \Ad (b^{-1}y) (\Ls_2))\\
& \cong& \Ls_1 \cap \Ad (b^{-1}y) (\Ls_2 ).
\end{eqnarray*}
Now write $b^{-1}y=a'b'$ with $a'\in G_{\Ls_1}$ and $b'\in G_{\Ls_2}$, then 
\begin{eqnarray*}
 \Ls_1 \cap \Ad (b^{-1}y) (\Ls_2 ) & = & \Ls_1 \cap \Ad(a') \Ad(b') (\Ls_2)\\
    & = & \Ls_1 \cap   \Ad(a')  (\Ls_2)\\
    & = & \Ad(a') ( \Ls_1 \cap \Ls_2) \\
    & \cong & \Ls_1 \cap \Ls_2,
\end{eqnarray*}
from which we indeed deduce that $\Ls_1 \cap \Ls_2\cong \Ad(x)(\Ls_1) \cap \Ad(y)\Ls_2 $ 
and so is semisimple.
\end{proof}  

Now we can prove the main decomposition theorem.

\begin{thm}\label{2.6}
Let $\Lg$ be a strongly disemisimple Lie algebra with $\Lg=\Ls_1\dotplus \Ls_2$. 
Then $\Lg$ is semisimple and isomorphic to the direct Lie algebra sum $\Ls_1\oplus \Ls_2$.
\end{thm}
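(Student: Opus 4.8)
The plan is to establish the theorem in two steps: first that $\rad(\Lg)=0$, i.e.\ $\Lg$ is semisimple, and then that a semisimple Lie algebra with a \emph{direct} decomposition into two semisimple subalgebras is isomorphic, as a Lie algebra, to $\Ls_1\oplus\Ls_2$. The first step will follow cleanly from Lemmas~\ref{2.3}--\ref{2.5}; the second will ultimately rely on Onishchik's decomposition results.

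For the first step I argue by contradiction and assume $\rad(\Lg)\neq0$. Choose a Levi subalgebra $\Ls$ of $\Lg$ containing the semisimple subalgebra $\Ls_1$; by Lemma~\ref{2.3}, $\Ls_1$ is not a Levi subalgebra, so $\Ls_1\subsetneq\Ls$. By Lemma~\ref{2.4} there is a semisimple subalgebra $\Ls_3\subseteq\Ls$ with $\Ls=\Ls_1+\Ls_3$ and $\dim(\Ls_1\cap\Ls_3)=\dim\rad(\Lg)>0$, and since $\Ls$ is semisimple, Lemma~\ref{2.5} shows that $\Lt:=\Ls_1\cap\Ls_3$ is a nonzero semisimple subalgebra. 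Writing $\Ls_2=\phi(\Ls_3)$ with $\phi=e^{\ad(z)}$ and $z\in\rad(\Lg)$ as in the proof of Lemma~\ref{2.4}, the subalgebra $\phi(\Lt)\subseteq\Ls_2$ is semisimple, and $\phi(\Lt)\cap\Ls_1\subseteq\Ls_2\cap\Ls_1=0$, so in particular $\Lt\cap\phi(\Lt)=0$. Since $\phi(t)-t\in\rad(\Lg)$ for all $t\in\Lt$, both $\Lt$ and $\phi(\Lt)$ lie in the subalgebra $\Lm:=\Lt\ltimes\rad(\Lg)$, and the count $\dim(\Lt+\phi(\Lt))=2\dim\Lt=\dim\Lt+\dim\rad(\Lg)=\dim\Lm$ gives $\Lm=\Lt\dotplus\phi(\Lt)$. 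Thus $\Lm$ is strongly disemisimple, while $\rad(\Lm)=\rad(\Lg)\neq0$ shows that $\Lt$ is a Levi subalgebra of $\Lm$ --- contradicting Lemma~\ref{2.3}. Hence $\rad(\Lg)=0$.

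For the second step $\Lg$ is semisimple and $\Lg=\Ls_1\dotplus\Ls_2$. If some simple ideal $\Li$ of $\Lg$ lies in one of the factors, say $\Li\subseteq\Ls_1$, then $\Lg/\Li$ is again semisimple and strongly disemisimple, with factors $\Ls_1/\Li$ and $(\Ls_2+\Li)/\Li\cong\Ls_2$; an induction on $\dim\Lg$ together with the splittings $\Lg=\Li\oplus\Lg'$ and $\Ls_1=\Li\oplus\Ls_1'$ then yields $\Lg\cong\Ls_1\oplus\Ls_2$. It remains to treat the case in which no simple ideal of $\Lg$ is contained in either factor; here I would pass to a compact form $\Lk=\Lk_1\dotplus\Lk_2$ as in the proof of Lemma~\ref{2.5} and invoke Onishchik's classification of decompositions of complex (semi)simple Lie algebras \cite{ON62,ON69}, by which every decomposition of a simple Lie algebra into two proper subalgebras has intersection of positive dimension. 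Since $\Ls_1\cap\Ls_2=0$, this forces $\Lg\cong\Ls_1\oplus\Ls_2$.

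The main obstacle I expect is precisely this last case of the second step: excluding a genuinely ``irreducible'' direct decomposition of a semisimple Lie algebra. Unlike the vanishing of $\rad(\Lg)$, which drops out of Lemmas~\ref{2.3}--\ref{2.5}, it does not seem reachable by elementary structure theory and really requires Onishchik's classification (or the compact-form analysis underlying Lemma~\ref{2.5}). A minor additional point is the construction of $\Lm$ in the first step, where one must verify that $\Lm$ is strongly disemisimple of the asserted dimension and has nonzero radical, so that Lemma~\ref{2.3} genuinely applies.
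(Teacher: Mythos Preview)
Your first step---constructing $\Lt=\Ls_1\cap\Ls_3$ and showing that $\Lm=\Lt\ltimes\rad(\Lg)$ is strongly disemisimple with $\Lt$ as a Levi factor, thereby contradicting Lemma~\ref{2.3}---is exactly the argument the paper gives. Your dimension count for $\Lm=\Lt\dotplus\phi(\Lt)$ is a slight repackaging of the paper's explicit verification of both inclusions, but the content is identical.

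The second step is where you and the paper diverge. The paper does not use induction or Onishchik's classification here at all: once $\Lg$ is known to be semisimple, it observes that the semisimple subalgebras $\Ls_1$ and $\Ls_2$ are automatically \emph{reductive in $\Lg$} (their adjoint action on $\Lg$ is completely reducible by Weyl), and then cites Koszul's theorem \cite{KOS} to conclude directly that $\Lg\cong\Ls_1\oplus\Ls_2$ as a Lie algebra. This is a one-line finish and entirely sidesteps the obstacle you identify.

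Your proposed route through Onishchik is plausible but, as written, has a gap: you invoke the statement that every decomposition of a \emph{simple} Lie algebra into two proper subalgebras has positive-dimensional intersection, yet in your ``irreducible'' case $\Lg$ is only semisimple, and you have not explained how to reduce to simple factors while preserving the zero-intersection hypothesis (projections to simple ideals need not keep $\Ls_1\cap\Ls_2=0$). This can likely be repaired with more of Onishchik's machinery, but it is considerably heavier than the Koszul argument, which you should use instead.
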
  

\begin{proof}
Assume that $\Lg$ is not semisimple. Then we have $\dim(\rad(\Lg))\ge 1$. Let $\Ls$ be a Levi subalgebra of $\Lg$
such that $\Ls_1\subseteq \Ls$. By Lemma $\ref{2.4}$ there exists a semisimple subalgebra $\Ls_3$ such that
$\Ls=\Ls_1+\Ls_3$ and $\dim (\Ls_1\cap \Ls_3)=\dim (\rad(\Lg))\ge 1$. It follows that
$\Ls_1\cap \Ls_3\subseteq \Ls$ is a semisimple subalgebra by Lemma $\ref{2.5}$. Hence $\Ls_1\cap \Ls_3$ is a 
Levi subalgebra of the Lie algebra $(\Ls_1\cap \Ls_3)\ltimes \rad(\Lg)$.
Let $\phi=e^{\ad(z)}$ the special automorphism with $\phi(\Ls_3)=\Ls_2$ from the proof of Lemma $\ref{2.4}$.
We claim that
\[
(\Ls_1\cap \Ls_3)\ltimes \rad(\Lg)=(\Ls_1\cap \Ls_3)\dotplus \phi (\Ls_1\cap \Ls_3).
\]
Hence $\Ls_1\cap \Ls_3$ cannot be a Levi subalgebra of $(\Ls_1\cap \Ls_3)\ltimes \rad(\Lg)$ by 
Lemma $\ref{2.3}$.
This is a contradiction and it follows that $\Lg$ is semisimple. We need to show the claimed equality. 
Let $y\in \rad(\Lg)$. Then
\[
y=s_1+s_2=s_1+\phi(s_3)=(s_1+s_3)+(\phi(s_3)-s_3)
\]  
for some $s_i\in \Ls_i$. Here $\phi(s_3)-s_3\in \rad(\Lg)$ and $s_1+s_3\in \Ls$, so that $s_1+s_3=0$ and $y=\phi(s_3)-s_3$.
Note that $s_3=-s_1\in \Ls_1\cap \Ls_3$. Now let $x\in \Ls_1\cap \Ls_3$ and $y\in \rad(\Lg)$. Then 
\[
x+y=x+\phi(s_3)-s_3=(x-s_3)+\phi(s_3) \in (\Ls_1\cap \Ls_3)\dotplus \phi (\Ls_1\cap \Ls_3).
\]
Conversely let $y,z\in \Ls_1\cap \Ls_3$. Then
\[
y+\phi(z)=(y+z)+(\phi-\id)(z) \in (\Ls_1\cap \Ls_3)\ltimes \rad(\Lg).
\]
Finally, since $\Ls_1$ and $\Ls_2$ are semisimple subalgebras, they are {\em reductive in} the 
semisimple Lie algebra $\Lg$. Hence we can apply Koszul's Theorem \cite{KOS} to conclude that 
$\Lg$ is isomorphic to the direct Lie algebra ideal sum of $\Ls_1$ and $\Ls_2$.
\end{proof}  

\begin{rem}
The theorem cannot be generalized to disemisimple Lie algebras. Indeed, we have shown in
Example $4.10$ of \cite{BU64}, that for $n\ge 2$,
\[
\Ls\Ll_n(\C)\ltimes V(n)=\Ls\Ll_n(\C)+\phi(\Ls\Ll_n(\C))=\Ls_1+\Ls_2
\]
is the vector space sum of two simple subalgebras $\Ls_1$ and $\Ls_2$. The Lie algebra 
$\Ls\Ll_n(\C)\ltimes V(n)$ is perfect, but not semisimple. We have $\phi=e^{\ad(z)}$ for a certain  $z$ in the radical $V(n)$, which is abelian. Note that the intersection satisfies
\[
\dim(\Ls_1\cap \Ls_2)=n^2-n-1.
\]  
\end{rem}

\section{Rigidity results}

Let $(\Lg,\Ln)$ be a pair of finite-dimensional complex Lie algebras admitting a post-Lie algebra structure.
The rigidity question is the following. \\[0.2cm]
{\em For which algebraic properties of $\Lg$ and $\Ln$ can we conclude
  that $\Lg$ and $\Ln$ are necessarily isomorphic?} \\[0.2cm]
The properties we are mostly interested in here are that $\Lg$ and $\Ln$ are {\em simple, semisimple or reductive}. \\
If both Lie algebras are solvable, it is obvious that $\Lg$ and $\Ln$ need not be isomorphic in general.
Indeed, consider the Lie algebra $\Lg=\Lr_{3,1}(\C)$ with basis $(e_1,e_2,e_3)$ and Lie brackets $[e_1,e_2]=e_2$, $[e_1,e_3]=e_3$,
and let $\Ln=\Lr_3(\C)$, given by the Lie brackets $\{ e_1,e_2\}=e_2$, $\{e_1,e_3\}=e_2+e_3$. The Lie algebras are
solvable, but not isomorphic. Nevertheless there exists a post-Lie algebra structure on $(\Lg,\Ln)$:

\begin{ex}
There exists a post-Lie algebra structure on the pair $(\Lg,\Ln)=(\Lr_{3,1}(\C),\Lr_3(\C))$, given
by $e_1\cdot e_3=-e_2$ and all other products $e_i\cdot e_j$ equal to zero.
\end{ex}

If $\Lg$ and $\Ln$ are both simple, then we know that rigidity holds. This is Proposition 
$4.6$ in \cite{BU41}.
However, an even stronger result holds if just $\Lg$ is simple, see Theorem $3.1$ of \cite{BU51}.

\begin{prop}
Let $(\Lg,\Ln)$ be a pair of Lie algebras, where $\Lg$ is simple and $\Ln$ is arbitrary.
Suppose that $(\Lg,\Ln)$ admits a post-Lie algebra structure. Then $\Ln$ is isomorphic to $\Lg$.
\end{prop}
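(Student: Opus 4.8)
The plan is to exploit the representation $L\colon\Lg\to\Der(\Ln)$ coming from identity \eqref{post2}, together with the fact that $\Lg$ is simple. First I would recall that $L$ is a Lie algebra homomorphism, so its kernel is an ideal of $\Lg$; since $\Lg$ is simple, either $L=0$ or $L$ is injective. If $L=0$, then \eqref{post1} forces $[x,y]=\{x,y\}$ for all $x,y$ (the product $x\cdot y$ is then symmetric, and $x\cdot y-y\cdot x=0$), so $\Lg=\Ln$ on the nose and we are done. Hence the substantive case is $L$ injective, and I would aim to show $\Ln$ is simple and isomorphic to $\Lg$ in that case.

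In the injective case, $\im(L)\subseteq\Der(\Ln)$ is a simple subalgebra of $\Der(\Ln)$ isomorphic to $\Lg$. The key structural observation is that $L(\Lg)$ acts on $\Ln$ (via the inclusion into $\Der(\Ln)$, i.e.\ the ``evaluation'' action $x.v = L(x)(v)$), and one wants to understand $\Ln$ as a module over the simple Lie algebra $\Lg$. I would next argue that $\Ln$, as a $\Lg$-module under this action, has no trivial summand of the wrong kind: using \eqref{post3}, the bracket $\{\,,\}$ of $\Ln$ is $\Lg$-equivariant (it is a morphism of $\Lg$-modules $\Ln\wedge\Ln\to\Ln$), and using \eqref{post1} one relates $\{\,,\}$ to $[\,,]$ and the product. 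A natural route is to show the radical of $\Ln$ is a $\Lg$-submodule on which things degenerate, and to rule it out; alternatively, one shows directly that $\dim\Ln=\dim\Lg$ and that the adjoint action of $\Ln$ and the $L$-action of $\Lg$ generate the same (semisimple) subalgebra of $\End(V)$, forcing $\Ln$ to be semisimple with a copy of $\Lg$ as Levi factor and then matching dimensions. Since $\dim\Ln=\dim V=\dim\Lg$, a semisimple $\Ln$ containing $\Lg$ as a subalgebra of the same dimension must equal $\Lg$.

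The cleanest packaging is probably: let $\Lh=\rad(\Ln)$; since each $L(x)$ is a derivation of $\Ln$, it preserves $\rad(\Ln)$ and $\nil(\Ln)$, so these are $\Lg$-submodules of $V$. One then shows, using \eqref{post1} and \eqref{post2}, that the right multiplications $R(x)$ restricted to suitable pieces, combined with the derivation property, force $\Lh$ to be $L(\Lg)$-invariant in a way incompatible with simplicity of $\Lg$ unless $\Lh=0$ or $\Lh=V$; the case $\Lh=V$ (i.e.\ $\Ln$ solvable) is excluded because a nonzero solvable Lie algebra cannot admit a faithful action of a simple Lie algebra by derivations whose image together with the inner derivations still leaves it solvable — more precisely, $L(\Lg)\subseteq\Der(\Ln)$ with $\Ln$ solvable of dimension equal to $\dim\Lg$ contradicts that simple modules for $\Lg$ have dimension $\ge 2$ while the Fitting/weight analysis of the solvable case forces a trivial submodule and hence a fixed vector, leading to the usual contradiction (as in the proof of Lemma~\ref{2.3}). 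Thus $\Ln$ is semisimple, $L$ embeds $\Lg$ as a subalgebra of $\Der(\Ln)=\ad(\Ln)\cong\Ln$, and comparing dimensions gives $\Lg\cong\Ln$.

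The main obstacle I anticipate is the semisimplicity step: cleanly ruling out a nonzero proper radical of $\Ln$ and the fully-solvable case, i.e.\ extracting a $\Lg$-fixed vector in the radical and deriving a contradiction with the post-Lie identities rather than with the abstract module theory alone. This is exactly where one must use \eqref{post1}--\eqref{post3} in combination (not just that $L$ is a representation), and it is the place where the argument genuinely depends on $\Lg$ being simple rather than merely semisimple. Once semisimplicity of $\Ln$ is in hand, the identification $\Lg\cong\Ln$ by dimension count together with faithfulness of $L$ is routine. Since the statement is quoted from \cite{BU51}, I would of course follow that proof in detail, but the above is the shape I expect it to take.
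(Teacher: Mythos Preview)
The paper does not prove this proposition; it quotes it as Theorem~3.1 of \cite{BU51} without argument, so there is no proof here to compare against directly. What the paper \emph{does} prove is the semisimple generalization, Theorem~\ref{3.3}, and its method differs from your sketch: it uses the embedding $\phi\colon\Lg\hookrightarrow\Ln\rtimes\Der(\Ln)$ from \cite{BU41} to exhibit $\Ln\rtimes\Lh=\phi(\Lg)\dotplus\Lh$ (with $\Lh=(q\circ\phi)(\Lg)$) as a direct vector-space sum of two semisimple subalgebras, then applies the new decomposition Theorem~\ref{2.6} to conclude that $\Ln\rtimes\Lh$, and hence its ideal $\Ln$, is semisimple; the isomorphism $\Lg\cong\Ln$ then follows by cancelling $\Lh$ from $\phi(\Lg)\oplus\Lh\cong\Ln\oplus\Lh$.

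Your outline opens correctly with the dichotomy $L=0$ versus $L$ injective, and the $L=0$ case is handled properly. The injective case, however, has a real gap at the semisimplicity step. The assertion that a solvable Lie algebra cannot carry a faithful action of a simple Lie algebra by derivations is false (take $\Ln=\C^n$ and $\mathfrak{sl}_n(\C)\subset\mathfrak{gl}_n(\C)=\Der(\Ln)$), and the appeal to ``a fixed vector \dots\ as in the proof of Lemma~\ref{2.3}'' does not transfer: that lemma concerns a Levi subalgebra inside a strongly disemisimple Lie algebra, a situation with no evident analogue here. You correctly note that all three post-Lie identities, not module theory alone, must be invoked --- but you do not actually invoke them; merely producing a $\Lg$-fixed vector in $\rad(\Ln)$ is not by itself a contradiction. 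The fully solvable case can in fact be dispatched by citing Theorem~4.2 of \cite{BU44}, but ruling out a nontrivial \emph{proper} radical requires more, and your sketch does not supply that argument.
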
  

We can now generalize this result to the case where $\Lg$ is semisimple.

\begin{thm}\label{3.3}
Let $(\Lg,\Ln)$ be a pair of Lie algebras, where $\Lg$ is semisimple and $\Ln$ is arbitrary.
Suppose that $(\Lg,\Ln)$ admits a post-Lie algebra structure. Then $\Ln$ is isomorphic to $\Lg$.
\end{thm}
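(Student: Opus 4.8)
The plan is to reduce the semisimple case to the simple case handled in the preceding proposition, using the direct-sum structure provided by Theorem \ref{2.6}. Write $\Lg = \Lg_1 \oplus \cdots \oplus \Lg_r$ as a direct sum of simple ideals. Let $A = (V,\cdot)$ be the algebra defining the post-Lie structure on $(\Lg,\Ln)$. Recall that the left multiplication $L\colon \Lg \to \Der(\Ln)$ is a Lie algebra representation, so $\Ln$ becomes a $\Lg$-module via $L$. The first step is to analyze the image $\im(L) \subseteq \Der(\Ln)$ and its kernel. Since $\Lg$ is semisimple, $\ker(L)$ is an ideal, hence a sum of some of the $\Lg_i$; write $\Lh = \ker(L)$ and let $\Lg = \Lh \oplus \Lm$ with $\Lm$ the complementary sum of simple ideals, so $L|_{\Lm}$ is injective.

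Next I would exploit equation \eqref{post1}, which gives $x\cdot y - y\cdot x = [x,y] - \{x,y\}$. For $x \in \Lh = \ker(L)$ we have $L(x) = 0$, i.e. $x \cdot y = 0$ for all $y$; hence $y \cdot x = -[x,y] + \{x,y\}$, which shows $R(x)(y) = -[x,y]+\{x,y\}$ is controlled by the two brackets. Applying \eqref{post2} with $[x,y] \in \Lh$ as well and using that $L$ vanishes on $\Lh$, one should find that $\Lh$ acts trivially in a strong sense and in fact that the brackets $[\,,]$ and $\{\,,\}$ agree on the relevant pieces; the goal is to conclude that $\Lh$ splits off as a common direct factor of both $\Lg$ and $\Ln$ with an induced post-Lie structure, so that by induction on $\dim \Lg$ we may assume $\ker(L) = 0$, i.e. $L$ is injective. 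Then $\Lg \hookrightarrow \Der(\Ln)$ via $L$, and since $\Lg$ is semisimple, $L(\Lg)$ is a semisimple subalgebra of $\Der(\Ln)$; because all derivations of $\Ln$ are inner once $\Ln$ has been reduced appropriately, or more directly, $L(\Lg)$ projects onto a Levi part of $\Der(\Ln)$, and the adjoint action identifies a semisimple subalgebra $\Ls_1 \cong \Lg$ sitting inside $\Ln$ (the image of $\Lg$ under $x \mapsto \ad_{\Ln}$-type considerations). One then shows, using \eqref{post1}, that $\Ln = \Ls_1 \dotplus \Ls_2$ for a second semisimple subalgebra, or that $\Ln$ itself is forced to be semisimple.

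The cleanest route, and the one I would pursue, is: from injectivity of $L$ and the fact that $L(x)$ is a derivation of $\Ln$ for each $x$, deduce that $\dim \Der(\Ln) \ge \dim \Lg = \dim \Ln$; combined with structural constraints this forces $\Ln$ to be semisimple (for a non-semisimple Lie algebra one arranges a dimension or nilradical obstruction, e.g. via Lemma \ref{2.2}-type reasoning applied to the image together with $R$). Once $\Ln$ is known to be semisimple, $\Der(\Ln) = \ad(\Ln) \cong \Ln$, so $L$ gives an embedding $\Lg \hookrightarrow \Ln$ of equal dimension, hence an isomorphism of Lie algebras $\Lg \cong \Ln$. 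Alternatively, once both are semisimple one invokes the simple case on each simple ideal: decompose $\Lg = \bigoplus \Lg_i$, show the post-Lie structure respects a matching decomposition of $\Ln = \bigoplus \Ln_i$ with each $(\Lg_i,\Ln_i)$ carrying an induced post-Lie structure and $\Lg_i$ simple, then apply the preceding Proposition to get $\Lg_i \cong \Ln_i$ for each $i$, whence $\Lg \cong \Ln$.

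I expect the main obstacle to be showing that $\Ln$ must be semisimple — equivalently, ruling out a nontrivial solvable radical of $\Ln$ — since $\Ln$ is a priori completely arbitrary and the only leverage is that $\Lg$ (semisimple) acts on it by derivations via an injective representation together with the twisted compatibility \eqref{post1} and \eqref{post3}. The key idea to overcome this is to look at how $\Lg$ acts on $\rad(\Ln)$ and on $\nil(\Ln)$: since $L(\Lg) \subseteq \Der(\Ln)$ preserves both, and $\Lg$ is semisimple, $\rad(\Ln)$ is a semisimple $\Lg$-module; one then plays the trace-form or Whitehead-lemma rigidity of semisimple-module structures against \eqref{post1} (which says the $\Lg$-action essentially reconstructs the difference of brackets) to force $\rad(\Ln) = 0$. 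Handling the kernel reduction in the first step is routine once set up correctly, so the crux is genuinely this semisimplicity-of-$\Ln$ claim.
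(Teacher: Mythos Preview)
Your proposal contains a genuine gap at exactly the point you yourself flag as the crux: showing that $\Ln$ is semisimple. None of the approaches you sketch there (a dimension inequality $\dim\Der(\Ln)\ge\dim\Ln$, Whitehead-type rigidity, analyzing the action on $\rad(\Ln)$) is carried out, and in fact none of them seems to go through directly. For instance, $\dim\Der(\Ln)\ge\dim\Ln$ does \emph{not} force $\Ln$ to be semisimple (many solvable Lie algebras have large derivation algebras), and knowing that $\rad(\Ln)$ is a semisimple $\Lg$-module via $L$ gives no immediate contradiction with axioms \eqref{post1}--\eqref{post3}. Your preliminary reduction to injective $L$ by splitting off $\ker L$ as a common direct factor of both $\Lg$ and $\Ln$ is also only asserted, not proved; it is not obvious that $\ker L$ is an ideal of $\Ln$ or that the post-Lie structure restricts to the complement.

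The key idea you are missing is to work not in $\Ln$ but in the larger Lie algebra $\Ln\rtimes\Lh$, where $\Lh=L(\Lg)\subseteq\Der(\Ln)$. By Proposition~2.11 of \cite{BU41} the map $x\mapsto(x,L(x))$ gives an embedding $\phi\colon\Lg\hookrightarrow\Ln\rtimes\Der(\Ln)$ with $p\circ\phi$ a linear isomorphism onto $\Ln$; restricting the second factor to $\Lh$ one finds $\Ln\rtimes\Lh=\phi(\Lg)\dotplus\Lh$ as a \emph{direct} vector-space sum of two semisimple subalgebras. Now Theorem~\ref{2.6} applies to $\Ln\rtimes\Lh$ (not to $\Ln$!) and yields that $\Ln\rtimes\Lh$ is semisimple and isomorphic to $\phi(\Lg)\oplus\Lh$. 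Since $\Ln$ is an ideal of this semisimple algebra, $\Ln$ is semisimple, and comparing simple factors in $\Ln\oplus\Lh\cong\phi(\Lg)\oplus\Lh$ gives $\Lg\cong\Ln$. No reduction to injective $L$ is needed, and the simple case is not used inductively.
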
  

\begin{proof}
By Proposition $2.11$ of \cite{BU41} there is an embedding
\[
\phi\colon \Lg\hookrightarrow \Ln\rtimes \Der(\Ln)
\]
such that $p\circ \phi\colon \Lg\ra \Ln$ is a vector space isomorphism, where $p\colon \Ln\rtimes \Der(\Ln)\ra \Ln$
denotes the projection on $\Ln$. If $q\colon \Ln\rtimes \Der(\Ln)\ra \Der(\Ln)$ denotes the projection on the second
factor, then $q\circ \phi \colon \Lg\ra \Der(\Ln)$ is a Lie algebra homomorphism. Hence
$\Lh=(q\circ \phi)(\Lg)\subseteq \Der(\Ln)$ is a semisimple Lie algebra, because $\Lg$ is semisimple.
Note that if $\Lh=0$ we obtain $\Lg\cong \Ln$ and we are done. We may view the embedding now as
$\phi\colon \Lg\hookrightarrow \Ln\rtimes \Lh$.
We claim that
\[
\Ln\rtimes \Lh = \phi(\Lg)\dotplus \Lh
\]
as a direct vector space sum of subalgebras of $\Ln\rtimes \Lh$. Indeed, for a given element $(x,y)\in \Ln\rtimes \Lh$ there
is a unique $v\in \Lg$ with $\phi(v)=(x,z)$ for some $z\in \Lh$, and there is a unique $w\in \Lh$ with $(x,y)=\phi(v)+w$.
Hence $\Ln\rtimes \Lh$ is the direct vector space sum of two semisimple subalgebras $\phi(\Lg)$ and $\Lh$.
By Theorem $\ref{2.6}$ it follows that $\Ln\rtimes \Lh$ is semisimple, and hence also the ideal $\Ln$ is semisimple.
Hence we have $\Ln\rtimes \Lh \cong \Ln\oplus \Lh$, and also $\Ln\rtimes \Lh\cong \phi(\Lg)\oplus \Lh$. Writing
\begin{align*}
\phi(\Lg) & \cong \La_1\oplus \cdots \oplus \La_i \\
\Ln & \cong  \Lb_1\oplus \cdots \oplus \Lb_j \\
\Lh & \cong \Lc_1\oplus \cdots \oplus \Lc_k
\end{align*}
as direct sum of simple ideals, $\phi(\Lg)\oplus \Lh\cong \Ln\oplus \Lh$ implies that
\[
(\La_1\oplus \cdots \oplus \La_i) \oplus  (\Lc_1\oplus \cdots \oplus \Lc_k) \cong (\Lb_1\oplus \cdots \oplus \Lb_j)\oplus
(\Lc_1\oplus \cdots \oplus \Lc_k). 
\]
Since the decomposition of a semisimple Lie algebra into simple ideals is unique up to permutation, we obtain
$i=j$ and $\Lg\cong \phi(\Lg)\cong \Ln$.
\end{proof}

\begin{rem}
A corresponding strong rigidity result for the case that $\Ln$ is semisimple does not hold. For example, we know
that there are post-Lie algebra structures on pairs $(\Lg,\Ln)$, where $\Ln$ is semisimple and $\Lg$ is solvable,
see Proposition $3.1$ in \cite{BU44}. Furthermore, even if $(\Lg,\Ln)$ is a pair of reductive Lie algebras admitting a post-Lie algebra
structure, $\Lg$ need not be isomorphic to $\Ln$ in general. Indeed, for $\Lg= \Ls\Ll_n(\C)\oplus \Lg\Ll_n(\C)$ and
$\Ln= \Ls\Ll_n(\C)\oplus \C^{n^2}$ the pair $(\Lg,\Ln)$ admits a post-Lie algebra structure, by taking the direct sum of a non-trivial
post-Lie algebra structure on $(\Lg\Ll_n(\C),\C^{n^2})$, coming from a pre-Lie algebra structure on $\Lg\Ll_n(\C)$, and the zero structure 
on $(\Ls\Ll_n(\C),\Ls\Ll_n(\C))$.
\end{rem}  

On the other hand, we still believe that there holds the following rigidity result. 

\begin{con}\label{3.5}
Let $(\Lg,\Ln)$ be a pair of Lie algebras, where $\Lg$ is reductive and $\Ln$ is semisimple.
Suppose that $(\Lg,\Ln)$ admits a post-Lie algebra structure. Then $\Ln$ is isomorphic to $\Lg$.
\end{con}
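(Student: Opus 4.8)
The plan is to mimic the proof of Theorem $\ref{3.3}$ as far as possible and see where the reductive case forces extra work. By Proposition $2.11$ of \cite{BU41} there is an embedding $\phi\colon \Lg\hookrightarrow \Ln\rtimes \Der(\Ln)$ with $p\circ\phi$ a vector space isomorphism onto $\Ln$. Since $\Ln$ is semisimple, $\Der(\Ln)=\ad(\Ln)\cong\Ln$, so in particular $\Der(\Ln)$ is semisimple; set $\Lh=(q\circ\phi)(\Lg)\subseteq\Der(\Ln)$, a homomorphic image of the reductive algebra $\Lg$. First I would observe that $\Ln\rtimes\Lh=\phi(\Lg)\dotplus\Lh$ as a direct vector space sum, exactly as before: every $(x,y)$ is uniquely $\phi(v)+w$ with $v\in\Lg$, $w\in\Lh$. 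The new difficulty is that $\phi(\Lg)$ need not be semisimple, so Theorem $\ref{2.6}$ does not apply directly. However $\Lh\cong\Lg/\ker(q\circ\phi)$ is a quotient of a reductive algebra, hence reductive; write $\Lg\cong\Ls\oplus\Lz$ with $\Ls$ semisimple and $\Lz$ central, and note $\ker(q\circ\phi)$ is an ideal, so $\Lh$ is again $\Ls'\oplus\Lz'$ reductive.

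The key reduction step I would pursue is to show the center of $\phi(\Lg)$ must be trivial, i.e. that $\Lg$ is in fact semisimple; then Theorem $\ref{3.3}$ finishes the job. Here is the idea. Let $\Lz$ be the center of $\Lg$ (and of $\phi(\Lg)$, since $\phi$ is a Lie homomorphism). The restriction $q\circ\phi|_\Lz$ lands in $\Der(\Ln)\cong\Ln$, which is semisimple and therefore has trivial center; but $(q\circ\phi)(\Lz)$ is a central — hence abelian — subalgebra of the image $\Lh$. That alone does not kill it, since $\Lh$ itself may have a center. The sharper tool is to use that $\Ln\rtimes\Lh$ is disemisimple in the sense of Definition $\ref{2.1}$: it is the (non-direct, a priori) sum $\phi(\Lg)+\Lh$ where now $\phi(\Lg)$ is merely reductive. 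I would instead argue as follows: since $p\circ\phi$ is an isomorphism onto the semisimple, hence perfect, algebra $\Ln$, the composite $p\circ\phi$ kills no nonzero ideal of $\Lg$ and forces $\Lg=[\Lg,\Lg]$, i.e. $\Lg$ is perfect; a perfect reductive Lie algebra is semisimple. This is the cleanest route: \emph{reductive $+$ perfect $\Rightarrow$ semisimple}, and perfectness should follow from $p\circ\phi\colon\Lg\to\Ln$ being a linear isomorphism onto a perfect algebra, because $p\circ\phi([\Lg,\Lg])$ has the same dimension considerations — though one must be careful, as $p\circ\phi$ is not a Lie homomorphism.

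Making that last point precise is, I expect, the main obstacle. The map $p\circ\phi$ is only a vector space isomorphism, not an algebra map, so one cannot directly conclude $\Lg$ perfect from $\Ln$ perfect. What one does control: $\phi$ itself is a Lie embedding, and the bracket in $\Ln\rtimes\Lh$ on $\phi(\Lg)$ decomposes into the $\Ln$-component and the $\Lh$-component; the $\Lh$-component of $[\phi(v),\phi(w)]$ is $[q\phi(v),q\phi(w)]$. So $q\circ\phi\colon\Lg\to\Lh$ is a surjective Lie homomorphism, and $\Lh$ reductive means $\Lh=[\Lh,\Lh]\oplus Z(\Lh)$. If $\Lg$ had a nontrivial center $\Lz$ not contained in $\ker(q\phi)$, its image would be a nonzero abelian ideal inside $Z(\Lh)\subseteq\Der(\Ln)$; but then one uses the post-Lie axioms — in particular that $L\colon\Lg\to\Der(\Ln)$ is $q\circ\phi$ composed with the inclusion, and $R$ interacts with it via \eqref{post1} — to locate a nonzero element acting trivially on $\Ln$, producing (as in the proof of Lemma $\ref{2.3}$, via Lemma $4.1$ of \cite{BU44}) a contradiction with $\phi(\Lg)\cap\Lh=0$. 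Alternatively, and perhaps more robustly, one shows $Z(\Lh)=0$ because $\Der(\Ln)$ is semisimple and any surjection $\Lg\twoheadrightarrow\Lh\hookrightarrow\Der(\Ln)$ onto a subalgebra of a semisimple algebra has semisimple image; then $\ker(q\phi)\supseteq\Lz$, so $\phi(\Lz)\subseteq\Ln$, and since $\phi(\Lz)$ is central in $\phi(\Lg)$ one checks it is a central ideal of $\Ln\rtimes\Lh$ — impossible unless $\Lz=0$ once we know $\Ln\rtimes\Lh$ is semisimple, which in turn needs Theorem $\ref{2.6}$ applied after $\phi(\Lg)$ has been shown semisimple. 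Untangling this apparent circularity — showing $\phi(\Lg)$ semisimple \emph{before} invoking Theorem $\ref{2.6}$ — is the crux, and I would try to break it by first establishing $Z(\Lh)=0$ purely from $\Lh$ being a subalgebra of the semisimple $\Der(\Ln)$ that is simultaneously a homomorphic image of the reductive $\Lg$ (so its center is both a direct summand and lies in a semisimple algebra), concluding $\Lh$ semisimple, hence $\ker(q\phi)\supseteq Z(\Lg)$; but that still leaves $\phi(Z(\Lg))$ sitting as an abelian direct summand of $\phi(\Lg)\subseteq\Ln\rtimes\Lh$, and the final contradiction must come from the rigidity of the embedding together with $p\circ\phi$ being onto. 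If this cannot be pushed through, the statement genuinely remains a conjecture, which is consistent with how the authors have phrased it.
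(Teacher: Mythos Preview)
The paper does not prove this statement; it is explicitly labelled Conjecture $\ref{3.5}$ and left open (it is case $(2)$, the red ``?'' in the existence table of Theorem $\ref{4.1}$). So there is no paper proof to compare against, and your closing sentence is correct: the authors regard this as unsettled.

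Your attempt contains a concrete false step. You argue that $Z(\Lh)=0$ ``purely from $\Lh$ being a subalgebra of the semisimple $\Der(\Ln)$ that is simultaneously a homomorphic image of the reductive $\Lg$''. But a reductive subalgebra of a semisimple Lie algebra need not be semisimple: a Cartan subalgebra is abelian, and $\mathfrak{gl}_k(\C)\subseteq\mathfrak{sl}_n(\C)$ (block-diagonally embedded for suitable $k<n$) is reductive with one-dimensional center. So nothing prevents $(q\circ\phi)(Z(\Lg))$ from surviving as a nonzero toral piece of $\Lh$ inside $\Der(\Ln)\cong\Ln$, and the deduction $\ker(q\phi)\supseteq Z(\Lg)$ collapses.

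The circularity you flag is genuine and is exactly why the authors state this as a conjecture. To invoke Theorem $\ref{2.6}$ on $\Ln\rtimes\Lh=\phi(\Lg)\dotplus\Lh$ you need both summands semisimple, but $\phi(\Lg)\cong\Lg$ is only known to be reductive. Your fallback---that $\phi(Z(\Lg))$ is a central ideal of $\Ln\rtimes\Lh$---also fails without extra input: for $z\in Z(\Lg)$ with $L(z)=0$ one has $\phi(z)=(z,0)$, and $[(z,0),(0,h)]=(-h(z),0)$, which vanishes only if $L(v)(z)=v\cdot z=0$ for all $v\in\Lg$. The post-Lie axioms do not force this; axiom \eqref{post1} with $[z,y]=0$ gives $y\cdot z=\{z,y\}$, which is generically nonzero in a semisimple $\Ln$. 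Thus none of the proposed routes closes, and the statement remains a conjecture.
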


\section{The existence table}

For the existence question of post-Lie algebra structures on pairs of Lie algebras $(\Lg,\Ln)$ we have introduced
a ``table'' in \cite{BU65}, where $\Lg$ and $\Ln$ belong to one of the following seven classes, defined by the following
properties: {\em abelian, nilpotent, solvable, simple, semisimple, reductive, complete}. Here we want to avoid an unnecessary
overlap, so we assume that nilpotent means non-abelian, solvable means non-nilpotent,
semisimple means non-simple, and reductive respectively complete means non-semisimple. Of course,  
a complete Lie algebra in the table may also be solvable and non-nilpotent. It cannot be nilpotent, since nilpotent Lie algebras
have a non-trivial center. Similarly, a reductive, non-semisimple Lie algebra may be abelian, but
has a non-trivial center and hence cannot be complete. The existence table in \cite{BU65} still has six open cases:
\vspace*{0.5cm}
\begin{center}
\begin{tabular}{l|lllllll}
$(\Lg,\Ln)$ & \color{blue}{$\Ln$ abe} & \color{green}{$\Ln$ nil} & $\Ln$ sol & \color{red}{$\Ln$ sim}
& \color{orange}{$\Ln$ sem} & \color{purple}{$\Ln$ red} &  \color{cyan}{$\Ln$ com} \\[1pt]
\hline
\color{blue}{$\Lg$ abelian} & $\ck$ & $\ck$ & $\ck$ & $-$ & $-$ & $-$ & $\ck$ \\[1pt]
\color{green}{$\Lg$ nilpotent} & $\ck$ & $\ck$ & $\ck$ & $-$ & $-$ & $-$ & $\ck$ \\[1pt]
 $\Lg$ solvable & $\ck$ & $\ck$ & $\ck$ & $\ck$ & $\ck$ & $\ck$ & $\ck$ \\[1pt]
\color{red}{$\Lg$ simple} & $-$ & $-$ & $-$ & $\ck$ & $-$ & $-$ & $-$ \\[1pt]
\color{orange}{$\Lg$ semisimple} & $-$ & $-$ & $-$ & $-$ & $\ck$ & $(1)$ & $-$ \\[1pt]
\color{purple}{$\Lg$ reductive}  & $\ck$ & $(3)$ & $(4)$ & $-$ & $(2)$ & $\ck$ & $\ck$ \\[1pt]
\color{cyan}{$\Lg$ complete}  & $\ck$ & $\ck$ & $\ck$ & $(5)$ & $(6)$ & $\ck$ & $\ck$ \\[1pt]
\end{tabular}
\end{center}
\vspace*{0.5cm}
We can now solve all open cases with one exception and obtain the following table.

\begin{thm}\label{4.1}
The existence table for post-Lie algebra structures on pairs $(\Lg,\Ln)$ is given as follows:
\vspace*{0.5cm}
\begin{center}
\begin{tabular}{l|lllllll}
$(\Lg,\Ln)$ & \color{blue}{$\Ln$ abe} & \color{green}{$\Ln$ nil} & $\Ln$ sol & \color{red}{$\Ln$ sim}
& \color{orange}{$\Ln$ sem} & \color{purple}{$\Ln$ red} &  \color{cyan}{$\Ln$ com} \\[1pt]
\hline
\color{blue}{$\Lg$ abelian} & $\ck$ & $\ck$ & $\ck$ & $-$ & $-$ & $-$ & $\ck$ \\[1pt]
\color{green}{$\Lg$ nilpotent} & $\ck$ & $\ck$ & $\ck$ & $-$ & $-$ & $-$ & $\ck$ \\[1pt]
 $\Lg$ solvable & $\ck$ & $\ck$ & $\ck$ & $\ck$ & $\ck$ & $\ck$ & $\ck$ \\[1pt]
\color{red}{$\Lg$ simple} & $-$ & $-$ & $-$ & $\ck$ & $-$ & $-$ & $-$ \\[1pt]
\color{orange}{$\Lg$ semisimple} & $-$ & $-$ & $-$ & $-$ & $\ck$ & $-$ & $-$ \\[1pt]
\color{purple}{$\Lg$ reductive}  & $\ck$ & $\ck$ & $\ck$ & $-$ & \color{red}{$?$} & $\ck$ & $\ck$ \\[1pt]
\color{cyan}{$\Lg$ complete}  & $\ck$ & $\ck$ & $\ck$ & $\ck$ & $\ck$ & $\ck$ & $\ck$ \\[1pt]
\end{tabular}
\end{center}
\vspace*{0.5cm}
A checkmark only means that there is \texttt{some} non-trivial pair $(\Lg,\Ln)$ of Lie algebras with the
given algebraic properties admitting a post-Lie algebra structure. A dash means that there does not exist any
post-Lie algebra structure on such a pair.
\end{thm}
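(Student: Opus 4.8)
Most of the $49$ cells of the table agree with those of the table in \cite{BU65} and are justified there; the plan is to treat only the six previously open cells $(1)$--$(6)$ and to confirm the entries that change.

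The dashes come for free from the rigidity results of Section $3$. By the Proposition above (Theorem $3.1$ of \cite{BU51}) when $\Lg$ is simple, and by Theorem \ref{3.3} when $\Lg$ is semisimple, any pair $(\Lg,\Ln)$ with $\Lg$ semisimple that admits a post-Lie algebra structure satisfies $\Ln\cong\Lg$; hence $\Ln$ is then semisimple, so it cannot be abelian, nilpotent, solvable, reductive (non-semisimple, so with non-trivial centre) or complete (non-semisimple in our convention). This turns the previously open cell $(1)$ ($\Lg$ semisimple, $\Ln$ reductive) into a dash, and in the ``$\Lg$ simple'' and ``$\Lg$ semisimple'' rows the only non-dash entry is the one forced by $\Ln\cong\Lg$, which carries a checkmark via the trivial post-Lie algebra structure on $(\Lg,\Lg)$. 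Cell $(2)$ ($\Lg$ reductive, $\Ln$ semisimple) stays open: this is precisely Conjecture \ref{3.5}, so the entry ``$?$'' is recorded and nothing is proved there.

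It remains to produce explicit pairs for cells $(3)$--$(6)$. For the ``$\Lg$ complete'' row I would use the correspondence between weight-one Rota--Baxter operators and post-Lie algebra structures: a Rota--Baxter operator $R$ of weight $1$ on a Lie algebra $\La$ yields a post-Lie algebra structure on the pair $(\La_R,\La)$, where $\La_R$ is the descendent Lie algebra with bracket $\{x,y\}_R=\{R(x),y\}+\{x,R(y)\}+\{x,y\}$ and product $x\cdot y=\{R(x),y\}$. One then wants a simple Lie algebra $\La$ carrying a weight-one Rota--Baxter operator whose descendent $\La_R$ is complete but not semisimple; this handles cell $(5)$, and cell $(6)$ follows immediately by passing to $(\La_R\oplus\La_R,\La\oplus\La)$, since a direct sum of complete Lie algebras is complete and a direct sum of post-Lie algebra structures is again one. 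For cells $(3)$ and $(4)$ I would instead use the embedding description of \cite{BU41}: a post-Lie algebra structure on $(\Lg,\Ln)$ is the same as a Lie subalgebra of $\Ln\rtimes\Der(\Ln)$ that is a vector-space complement to $\Der(\Ln)$, so it is enough to exhibit a nilpotent (resp. solvable, non-nilpotent) $\Ln$ whose derivation algebra is rich enough to contain a reductive complement. The results of Section $5$ already constrain such examples --- there is no post-Lie algebra structure on a pair $(\Lg,\Ln)$ with $\Lg$ reductive of one-dimensional centre and $\Ln$ solvable non-nilpotent, nor on $(\mathfrak{gl}_n(\C),\Ln)$ with $\Ln$ $2$-step nilpotent --- so the example for cell $(4)$ must have centre of dimension at least two, and the example for cell $(3)$, if built on $\Lg=\mathfrak{gl}_n(\C)$, must use a nilpotent $\Ln$ of class at least three.

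The genuinely delicate point is cell $(5)$: finding a \emph{simple} Lie algebra together with a weight-one Rota--Baxter operator whose descendent is complete and non-semisimple. The obvious candidate $\La=\mathfrak{sl}_2(\C)$ fails, since every descendent is then three-dimensional and there is no three-dimensional complete non-semisimple Lie algebra, so one is forced to a simple Lie algebra of higher rank and to verify completeness of $\La_R$ directly, i.e. $Z(\La_R)=0$ and $\Der(\La_R)=\ad(\La_R)$. Once such $\La$ and $R$ are in hand, the remaining verifications --- the dashes, the direct-sum reduction of cell $(6)$ to cell $(5)$, and the reductive/nilpotent and reductive/solvable examples of cells $(3)$ and $(4)$ --- are routine.
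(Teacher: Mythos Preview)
Your treatment of cells $(1)$, $(2)$ and $(5)$ matches the paper's: Theorem~\ref{3.3} settles $(1)$, cell $(2)$ is left as Conjecture~\ref{3.5}, and for $(5)$ the paper also uses a weight-one Rota--Baxter operator on a simple Lie algebra of rank $>1$, namely $\Ln=\mathfrak{sl}_3(\C)$, obtaining the complete non-semisimple $\Lg\cong\mathfrak{aff}_2(\C)\oplus\mathfrak{aff}_1(\C)$ (Proposition~\ref{4.3}). Your reduction of $(6)$ to $(5)$ via direct sums is valid and in fact a bit slicker than what the paper does: the paper instead builds a separate example on $\Ln=\mathfrak{sl}_2(\C)\oplus\mathfrak{sl}_2(\C)$ with $\Lg\cong\Lr_2(\C)^{\oplus 3}$ (Proposition~\ref{4.4}), again via a Rota--Baxter operator.

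Where you diverge significantly is in cells $(3)$ and $(4)$. You propose to hunt for a reductive complement inside $\Ln\rtimes\Der(\Ln)$ and then invoke the obstructions of Section~$5$ to constrain the search (centre of dimension $\ge 2$, nilpotency class $\ge 3$, etc.). The paper sidesteps all of this with a much simpler device: it takes a \emph{direct sum} of two known post-Lie algebra structures. Concretely (Proposition~\ref{4.2}), a pre-Lie structure on $\mathfrak{sl}_2(\C)\oplus\C$ is a post-Lie structure on $(\mathfrak{sl}_2(\C)\oplus\C,\C^4)$, and an LR-structure on $\Ln_3(\C)$ (resp.\ $\Lr_2(\C)$) is a post-Lie structure on $(\C^3,\Ln_3(\C))$ (resp.\ $(\C^2,\Lr_2(\C))$); their direct sum gives post-Lie structures on $(\mathfrak{sl}_2(\C)\oplus\C^4,\C^4\oplus\Ln_3(\C))$ and $(\mathfrak{sl}_2(\C)\oplus\C^3,\C^4\oplus\Lr_2(\C))$. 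Here $\Lg$ is reductive with large centre, so the Section~$5$ obstructions never enter, and $\Ln$ is $2$-step nilpotent (resp.\ solvable non-nilpotent). Your embedding approach would eventually work, but it is considerably harder to execute and you have not actually produced an example; the direct-sum trick gives the checkmarks in one line.
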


\begin{proof}
By Theorem $\ref{3.3}$ there is no post-Lie algebra structure on a pair $(\Lg,\Ln)$, where $\Lg$ is semisimple,
and $\Ln$ is not semisimple. This solves case $(1)$. 
Case $(2)$ is still open, see Conjecture $\ref{3.5}$.
For the remaining cases, there exist non-trivial examples admitting a post-Lie algebra structure. The cases
$(3),(4)$ follow from Proposition $\ref{4.2}$, the case $(5)$ from Proposition $\ref{4.3}$ and the case $(6)$
from Proposition $\ref{4.4}$ below.
\end{proof}

Denote by $\Ln_3(\C)$ the Heisenberg Lie algebra and by $\Lr_2(\C)$ the $2$-dimensional non-abelian
Lie algebra. 

\begin{prop}\label{4.2}
There exists a post-Lie algebra structure on the pairs
\begin{align*}
(\Lg,\Ln) & =(\Ls\Ll_2(\C)\oplus \C^4,\C^4\oplus \Ln_3(\C)),\\
(\Lg,\Ln) & =(\Ls\Ll_2(\C)\oplus \C^3,\C^4\oplus \Lr_2(\C)).
\end{align*}            
\end{prop}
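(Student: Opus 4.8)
The plan is to obtain both post-Lie algebra structures by gluing together smaller ones along a direct sum decomposition. The elementary fact I would use is: if $\cdot_1$ is a post-Lie algebra structure on $(\Lg_1,\Ln_1)$ and $\cdot_2$ one on $(\Lg_2,\Ln_2)$, then $(x_1+x_2)\cdot(y_1+y_2):=x_1\cdot_1 y_1+x_2\cdot_2 y_2$ is a post-Lie algebra structure on $(\Lg_1\oplus\Lg_2,\Ln_1\oplus\Ln_2)$, where both $\oplus$ are direct sums of Lie ideals; indeed each of \eqref{post1}, \eqref{post2}, \eqref{post3} splits into the corresponding identity for $\cdot_1$ and for $\cdot_2$. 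So it suffices to exhibit, for each pair in the statement, a compatible decomposition into pairs that already carry a post-Lie algebra structure.

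The first building block is the pair $(\Lg\Ll_2(\C),\C^4)$. Matrix multiplication $A\cdot B:=AB$ is a pre-Lie (left-symmetric) algebra structure on $\Lg\Ll_2(\C)$: $A\cdot B-B\cdot A=[A,B]$ gives \eqref{pre1}, and associativity of matrix multiplication makes the associator vanish, hence symmetric in the first two arguments, which is \eqref{pre2}. Since a pre-Lie algebra structure on $\Lg$ is exactly a post-Lie algebra structure on $(\Lg,\Ln)$ with $\Ln$ abelian, and the underlying $4$-dimensional space of $\Lg\Ll_2(\C)$ with the zero bracket is $\C^4$, the pair $(\Lg\Ll_2(\C),\C^4)$ admits a post-Lie algebra structure. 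The second and third building blocks are $(\C^2,\Lr_2(\C))$ and $(\C^3,\Ln_3(\C))$, both with $\Lg$ abelian. Writing $\Lr_2(\C)$ on a basis $(e_1,e_2)$ with $\{e_1,e_2\}=e_2$, the product with $e_2\cdot e_1=e_2$ and all other products of basis vectors zero is a post-Lie algebra structure on $(\C^2,\Lr_2(\C))$; writing $\Ln_3(\C)$ on a basis $(e_1,e_2,e_3)$ with $\{e_1,e_2\}=e_3$, the product with $e_2\cdot e_1=e_3$ and all other products zero is a post-Lie algebra structure on $(\C^3,\Ln_3(\C))$. (Both are verified by a direct check of \eqref{post1}--\eqref{post3} on basis vectors; the role of $\Ln_3(\C)$ and $\Lr_2(\C)$ having a central, one-dimensional derived subalgebra is what makes the sparse product consistent. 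Alternatively one can take $x\cdot y:=-\tfrac12\{x,y\}$, which is a post-Lie algebra structure on $(\C^n,\Ln)$ for every $2$-step nilpotent $\Ln$.)

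To assemble, I use $\Lg\Ll_2(\C)\cong\Ls\Ll_2(\C)\oplus\C$ as Lie algebras (the scalar matrices forming the centre), so that
\[
\Ls\Ll_2(\C)\oplus\C^4\cong\Lg\Ll_2(\C)\oplus\C^3,\qquad \Ls\Ll_2(\C)\oplus\C^3\cong\Lg\Ll_2(\C)\oplus\C^2 .
\]
Then the first pair $(\Ls\Ll_2(\C)\oplus\C^4,\C^4\oplus\Ln_3(\C))$ is, up to isomorphism on each side, the direct sum of $(\Lg\Ll_2(\C),\C^4)$ and $(\C^3,\Ln_3(\C))$, and the second pair $(\Ls\Ll_2(\C)\oplus\C^3,\C^4\oplus\Lr_2(\C))$ is the direct sum of $(\Lg\Ll_2(\C),\C^4)$ and $(\C^2,\Lr_2(\C))$. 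By the direct-sum fact above, both pairs admit a post-Lie algebra structure.

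I do not expect a real obstacle here: the whole argument is a finite construction. The only genuine choice is the regrouping $\Ls\Ll_2(\C)\oplus\C^k\cong\Lg\Ll_2(\C)\oplus\C^{k-1}$, which absorbs the simple summand into a $\Lg\Ll_2(\C)$ carrying the matrix pre-Lie product and leaves purely abelian factors to be paired with $\Ln_3(\C)$, resp.\ $\Lr_2(\C)$; after that, the remaining work is just verifying \eqref{post1}--\eqref{post3} for the two two- and three-dimensional pairs, which is routine.
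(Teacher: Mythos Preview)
Your proof is correct and follows essentially the same approach as the paper: decompose each pair as a direct sum of $(\Lg\Ll_2(\C),\C^4)$ with $(\C^3,\Ln_3(\C))$ respectively $(\C^2,\Lr_2(\C))$, put a pre-Lie structure on the first factor and a post-Lie structure on the second, and take the componentwise product. The only cosmetic difference is that the paper cites existing classifications (pre-Lie structures on $\Lg\Ll_2(\C)$ from \cite{BU4}, LR-structures on $\Ln_3(\C)$ and $\Lr_2(\C)$ from \cite{BU34}) for the building blocks, whereas you supply explicit products---matrix multiplication on $\Lg\Ll_2(\C)$ and the sparse products $e_2\cdot e_1=e_3$ resp.\ $e_2\cdot e_1=e_2$---and verify the axioms directly.
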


\begin{proof}
Let $x\cdot y$ be a LSA, or pre-Lie algebra structure on $\Ls\Ll_2(\C)\oplus \C$. Such structures exist and have been
classified in \cite{BU4}, Theorem $3$. By definition this structure is a post-Lie algebra structure on the pair
$(\Ls\Ll_2(\C)\oplus \C,\C^4)$. Let $x\circ y$ be an LR-structure on $\Ln_3(\C)$. Such structures exist and have been classified
in \cite{BU34}, Proposition $3.1$. By definition this structure is a post-Lie algebra structure on the pair $(\C^3,\Ln_3(\C))$.
Now equip the pair $(\Lg,\Ln)=(\Ls\Ll_2(\C)\oplus \C^4,\C^4\oplus \Ln_3(\C))$ with the post-Lie algebra structure
\[
(x_1,y_1)\bullet (x_2,y_2)=(x_1\cdot x_2, y_1\circ y_2)
\]
induced by the post-Lie algebra structures on $(\Ls\Ll_2(\C)\oplus \C,\C^4)$ and $(\C^3,\Ln_3(\C))$.
This defines a post-Lie algebra structure on $(\Ls\Ll_2(\C)\oplus \C^4,\C^4\oplus \Ln_3(\C))$. \\[0.2cm]
The same construction works by choosing an LR-structure $x\circ y$ on $\Lr_2(\C)$, which is a post-Lie algebra structure on the pair
$(\C^2,\Lr_2(\C)$. The post-Lie algebra structures on $(\Ls\Ll_2(\C)\oplus \C,\C^4)$ and $(\C^2,\Lr_2(\C)$ induce a post-Lie algebra
structure on the direct sum of these pairs, e.g., on $(\Ls\Ll_2(\C)\oplus \C^3,\C^4\oplus \Lr_2(\C))$.
\end{proof}

Denote by $E_{ij}$ the matrix with entry $1$ at position $(i,j)$, and all other entries equal to $0$. Let $\Ln=\Ls\Ll_3(\C)$
and consider the following basis for it:
\begin{align*}
e_1 & =E_{12},\; e_2 = E_{13},\; e_3=E_{21},\;  e_4=E_{23},\;e_5 =E_{31}, \\
e_6 & = E_{32},\; e_7=E_{11}-E_{22},\;  e_8=E_{22}-E_{33}.
\end{align*}
Then the Lie brackets are defined by
\begin{align*}
\{e_1,e_3 \} & = e_7,     & \{e_2,e_6 \} & = e_1,   &  \{e_4,e_6 \} & = e_8,   \\
\{e_1,e_4 \} & = e_2,     & \{e_2,e_7 \} & = -e_2,  &  \{e_4,e_7 \} & = e_4,   \\
\{e_1,e_5 \} & = -e_6,    & \{e_2,e_8 \} & = -e_2,  &  \{e_4,e_8 \} & = -2e_4,  \\
\{e_1,e_7 \} & = -2e_1,   & \{e_3,e_6 \} & = -e_5,  &  \{e_5,e_7 \} & = e_5,   \\
\{e_1,e_8 \} & = e_1,     & \{e_3,e_7 \} & = 2e_3,  &  \{e_5,e_8 \} & = e_5,  \\
\{e_2,e_3 \} & = -e_4,    & \{e_3,e_8 \} & = -e_3,  &  \{e_6,e_7 \} & = -e_6, \\
\{e_2,e_5 \} & = e_7+e_8, & \{e_4,e_5 \} & = e_3,   &  \{e_6,e_8 \} & = 2e_6.
\end{align*}

Let $\La\Lf\Lf_n(\C)=\mathfrak{gl}_n(\C)\ltimes \C^n$ be the affine Lie algebra of dimension $n^2+n$. It is
simply complete, i.e., it is complete and has no nontrivial complete ideal. We can choose a basis $(f_1,\ldots ,f_8)$ for
$\mathfrak{aff}_2(\C)\oplus \mathfrak{aff}_1(\C)$ with Lie brackets
\begin{align*}
[f_1,f_2] & = f_2,    & [f_2,f_4] & = f_2,    &  [f_4,f_6] & = f_6,\\
[f_1,f_3] & = -f_3    & [f_2,f_6] & = f_5,    &  [f_7,f_8] & = f_7,\\
[f_1,f_5] & = f_5,    & [f_3,f_4] & =-f_3,    \\
[f_2,f_3] & =f_1-f_4, & [f_3,f_5] & = f_6,
\end{align*}
where $(f_1,\ldots f_6)=(E_{11}, E_{12}, E_{21}, E_{22}, E_{13}, E_{23})$ is a basis of $\mathfrak{aff}_2(\C)$ and
$(f_7,f_8)$ is  a basis of $\mathfrak{aff}_1(\C)=\Lr_2(\C)$.

\begin{prop}\label{4.3}
Let $\Ln=\Ls\Ll_3(\C)$. Then there exists a post-Lie algebra structure on the pair $(\Lg,\Ln)$ with
$\Lg\cong \La\Lf\Lf_2(\C)\oplus \La\Lf\Lf_1(\C)$. More precisely, $x\cdot y=\{\phi(x),y\}$ with
\begin{align*}
\phi(e_3) & =-e_3-e_7, \\
\phi(e_4) & =-e_4-e_5, \\
\phi(e_i) & =0 \text{ for } i=1,2,5,6,7,8
\end{align*}  
defines an inner post-Lie algebra structure on the pair $(\Lg,\Ln)$ with Lie brackets for $\Lg$ given by
\begin{align*}
[e_1,e_3] & = 2e_1,     & [e_2,e_5] & = e_7+e_8, & [e_4,e_7]& =-e_5,\\
[e_1,e_4] & = e_6,      & [e_2,e_6] & = e_1,     & [e_4,e_8] & = -e_5,\\
[e_1,e_5] & = -e_6,     & [e_2,e_7] & = -e_2,    & [e_5,e_7] & = e_5,\\
[e_1,e_7] & = -2e_1,    & [e_2,e_8] & = -e_2,    & [e_5,e_8]& = e_5,\\
[e_1,e_8] & = e_1,      & [e_3,e_4] & =  e_4,    & [e_6,e_7] & = -e_6,\\
[e_2,e_3] & = e_2,      & [e_3,e_5] & = e_5,     & [e_6,e_8] & = 2e_6,\\
[e_2,e_4] & = -e_7-e_8, & [e_3,e_6] & =-e_6,     &           &
\end{align*}
and the post-Lie algebra structure given by
\begin{align*}
e_3\cdot e_1 & = -2e_1+e_7,   & e_3\cdot e_6 & = e_5-e_6,  & e_4\cdot e_4 & = e_3,\\
e_3\cdot e_2 & = -e_2-e_4,    & e_3\cdot e_7 & = -2e_3,    & e_4\cdot e_5 & = -e_3, \\
e_3\cdot e_3 & = 2e_3,        & e_3\cdot e_8 & = e_3,      & e_4\cdot e_6 & = -e_8, \\
e_3\cdot e_4 & = e_4,         & e_4\cdot e_1 & = e_2-e_6,  & e_4\cdot e_7 & = -e_4-e_5,\\
e_3\cdot e_5 & = e_5,         & e_4\cdot e_2 & = e_7+e_8,  & e_4\cdot e_8 & = 2e_4-e_5.
\end{align*}
\end{prop}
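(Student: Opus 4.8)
The product in the statement has the form $x\cdot y=\{\phi(x),y\}$, i.e. it is the inner post-Lie algebra structure attached to the linear operator $\phi$ on $\Ln=\Ls\Ll_3(\C)$, and the plan is to recognise $\phi$ as minus a projection coming from a vector-space decomposition of $\Ls\Ll_3(\C)$ into two subalgebras; this trivialises all the verifications. Observe first that for \emph{any} linear $\phi$ the product $\{\phi(x),y\}$ satisfies \eqref{post3}, because that identity is literally the Jacobi identity $\{\phi(x),\{y,z\}\}=\{\{\phi(x),y\},z\}+\{y,\{\phi(x),z\}\}$ in $\Ln$; identity \eqref{post1} forces the bracket of $\Lg$ to be $[x,y]:=\{\phi(x),y\}+\{x,\phi(y)\}+\{x,y\}$; and with this bracket \eqref{post2} reduces, once more by the Jacobi identity in $\Ln$, to $\{\phi([x,y])-\{\phi(x),\phi(y)\},z\}=0$ for all $z$, hence — since $\Ls\Ll_3(\C)$ has trivial centre — to the weight-one Rota--Baxter identity
\[
\phi\bigl(\{\phi(x),y\}+\{x,\phi(y)\}+\{x,y\}\bigr)=\{\phi(x),\phi(y)\}.
\]
So it suffices to verify that $\phi$ satisfies this identity, that $[\,,\,]$ is a Lie bracket, and that $(V,[\,,\,])\cong\La\Lf\Lf_2(\C)\oplus\La\Lf\Lf_1(\C)$; the displayed bracket table for $\Lg$ and product table for $x\cdot y$ are then obtained simply by writing out $[e_i,e_j]$ and $\{\phi(e_i),e_j\}$ on the basis, using $\phi(e_3)=-e_3-e_7$, $\phi(e_4)=-e_4-e_5$ and $\phi(e_i)=0$ otherwise.

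For the Rota--Baxter identity I would first note that $\phi^2=-\phi$, so that $P:=-\phi$ is an idempotent with image $\Lm:=\im\phi=\s\{e_3+e_7,\,e_4+e_5\}$ and kernel $\Lk:=\ker\phi=\s\{e_1,e_2,e_5,e_6,e_7,e_8\}$, giving the vector-space decomposition $\Ln=\Lk\dotplus\Lm$. A one-line computation shows that, for $\phi=-P$, the weight-one Rota--Baxter identity is equivalent to $\Lk$ and $\Lm$ both being subalgebras of $\Ln$, and this is immediate from the bracket table of $\Ls\Ll_3(\C)$: $\Lm$ is two-dimensional and closed under $\{\,,\}$, and the brackets among $e_1,e_2,e_5,e_6,e_7,e_8$ all stay inside their span.

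Next, using $\phi|_{\Lm}=-\id$ and $\phi|_{\Lk}=0$, a direct check gives that $[\,,\,]$ restricts to $\{\,,\}$ on $\Lk$, to $-\{\,,\}$ on $\Lm$, and to $0$ between $\Lk$ and $\Lm$; hence $\Lg$ is isomorphic, as a Lie algebra, to the direct sum $(\Lk,\{\,,\})\oplus(\Lm,-\{\,,\})$ — in particular $[\,,\,]$ really is a Lie bracket. To finish I identify the two summands. On $\Lm$ one computes $\{e_3+e_7,e_4+e_5\}=-(e_4+e_5)$, so $(\Lm,-\{\,,\})$ is the non-abelian two-dimensional Lie algebra $\Lr_2(\C)=\La\Lf\Lf_1(\C)$. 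For $\Lk$ one exhibits $\s\{e_1,e_6\}$ as an abelian ideal and $\s\{e_2,e_5,e_7,e_8\}$ as a complementary subalgebra isomorphic to $\mathfrak{gl}_2(\C)$ — with standard $\Ls\Ll_2$-triple $(e_7+e_8,\,e_2,\,e_5)$ and central element $e_7-e_8$ — acting on $\s\{e_1,e_6\}$ through (a rescaling of) the standard two-dimensional representation; thus $\Lk\cong\mathfrak{gl}_2(\C)\ltimes\C^2=\La\Lf\Lf_2(\C)$, and therefore $\Lg\cong\La\Lf\Lf_2(\C)\oplus\La\Lf\Lf_1(\C)$.

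There is no genuine obstacle: once $\phi$ is recognised as $-P$ for the projection $P$ with image $\Lm$ and kernel $\Lk$, the Rota--Baxter property, the Lie-bracket property of $[\,,\,]$, and the decomposition of $\Lg$ all fall out immediately, and the post-Lie axioms were reduced to the Rota--Baxter identity at the outset. The only bookkeeping step carrying any content is pinning down the $\mathfrak{gl}_2(\C)\ltimes\C^2$ structure inside the six-dimensional subalgebra $\Lk$; everything else is routine expansion on the basis.
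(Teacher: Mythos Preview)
Your proof is correct and rests on the same key fact as the paper's: since $\Ln=\Ls\Ll_3(\C)$ is complete (centerless with only inner derivations), inner post-Lie structures $x\cdot y=\{\phi(x),y\}$ correspond exactly to Rota--Baxter operators $\phi$ of weight~$1$ on $\Ln$. The paper invokes this correspondence (Corollary~2.15 of \cite{BU59}), says a short computation shows $\phi$ is a solution, and then certifies $\Lg\cong\La\Lf\Lf_2(\C)\oplus\La\Lf\Lf_1(\C)$ by writing down an explicit $8\times 8$ change-of-basis matrix.

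Your argument is a bit more transparent on both counts. By recognising that $\phi=-P$ for the projection onto the subalgebra decomposition $\Ln=\Lk\dotplus\Lm$ with $\Lk=\s\{e_1,e_2,e_5,e_6,e_7,e_8\}$ and $\Lm=\s\{e_3+e_7,\,e_4+e_5\}$, you are using exactly the construction (Proposition~2.7 of \cite{BU59}) that the paper deploys explicitly only in the \emph{next} proposition; this makes the Rota--Baxter identity immediate rather than a computation. Your identification of $\Lg$ as the Lie-algebra direct sum $(\Lk,\{\,,\})\oplus(\Lm,-\{\,,\})$ and the subsequent recognition of $\Lk$ as $\mathfrak{gl}_2(\C)\ltimes\C^2$ is likewise more structural than the paper's bare matrix. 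So: same approach, but your write-up explains \emph{why} $\phi$ works and \emph{why} $\Lg$ has the claimed form, whereas the paper simply verifies both by direct computation.
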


\begin{proof}
By Corollary $2.15$ in \cite{BU59}, post-Lie algebra structures on $(\Lg,\Ln)$ are in bijection to Rota-Baxter operators
of weight $\la=1$ on $\Ln$, since $\Ln$ is complete. For $x\cdot y=\{\phi(x),y\}$ we can choose
$\phi(e_i)=0$ for $i=1,2,5,6,7,8$. Then a short computation shows that the
above homomorphism $\phi$ is a possible solution. We have a Lie algebra isomorphism
\[
f\colon \Lg\ra \La\Lf\Lf_2(\C)\oplus \La\Lf\Lf_1(\C)
\]
given by
\[
f=\begin{pmatrix} 0 & 0  & -2 & 0  & 0  & 0 & 2  & -1 \cr
                  0 & 1  & 0  & 0  & 0  & 0 & 0  & 0 \cr
                  0 & 0  & 0  & -1 & 1  & 0 & 0  & 0 \cr
                  0 & 0  & -1 & 0  & 0  & 0 & 1  & -2 \cr
                  1 & 0  & 0  & 0  & 0  & 0 & 0  & 0 \cr
                  0 & 0  & 0  & 0  & 0  & 1 & 0  & 0 \cr
                  0 & 0  & 0  & 1  & 0  & 0 & 0  & 0 \cr
                  0 & 0  & -1 & 0  & 0  & 0 & 0  & 0 \cr
  \end{pmatrix}
\]
where $\det(f)=-3$.
\end{proof}

For the next result let
\[
  e_1=E_{12}, e_2=E_{21}, e_3=E_{11}-E_{22}, e_4=E_{45},e_5=E_{54},e_6=E_{44}-E_{55}
\]
be a basis for the Lie algebra $\Ln=\Ls\Ll_2(\C)\oplus \Ls\Ll_2(\C)$. 

\begin{prop}\label{4.4}
Let $\Ln=\Ls\Ll_2(\C)\oplus \Ls\Ll_2(\C)$. Then there exists a post-Lie algebra structure on the pair $(\Lg,\Ln)$ with
$\Lg\cong \Lr_2(\C)\oplus \Lr_2(\C)\oplus \Lr_2(\C)$. It is given by $x\cdot y=\{\phi(x),y\}$ with
\[
  \phi=\begin{pmatrix} 0 & 0  & 0  & 0  & 0  & 0 \cr
                       0 & -1 & 0  & 0  & 0  & 0 \cr
                       0 & 0  & 0  & 0  & -1 & 0 \cr
                       0 & 0  & 0  & 0  & 0  & 0 \cr
                       0 & 0  & 0  & 0  & -1 & 0 \cr
                       0 & 0  & 0  & 0  & 0  & 0 \end{pmatrix}
\]
where the Lie brackets for $\Lg$ given by
\begin{align*}
[e_1,e_3] & = -2e_1,   & [e_2,e_5] & =-2e_2,  & [e_4,e_6]& =-2e_4,\\
[e_1,e_5] & = 2e_1,    &   &
\end{align*}
and the post-Lie algebra structure is given explicitly by
\begin{align*}
e_2\cdot e_1 & = e_3,      & e_5\cdot e_1 & =-2e_1,   & e_5\cdot e_4 & =e_6,\\
e_2\cdot e_3 & = -2e_2,    & e_5\cdot e_2 & =2e_2,   & e_5\cdot e_6 & =-2e_5. \\
\end{align*}
\end{prop}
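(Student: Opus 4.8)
The plan is to mimic the proof of Proposition~\ref{4.3}. Since $\Ln=\Ls\Ll_2(\C)\oplus \Ls\Ll_2(\C)$ is semisimple, it is in particular complete, so by Corollary~$2.15$ in \cite{BU59} post-Lie algebra structures on a pair $(\Lg,\Ln)$ correspond bijectively to Rota-Baxter operators of weight $\la=1$ on $\Ln$, i.e.\ to linear maps $\phi\colon \Ln\to \Ln$ with
\[
\{\phi(x),\phi(y)\}=\phi\bigl(\{\phi(x),y\}+\{x,\phi(y)\}+\{x,y\}\bigr)
\]
for all $x,y\in \Ln$. For such a $\phi$ the product $x\cdot y:=\{\phi(x),y\}$ is the wanted post-Lie algebra structure, and by \eqref{post1} the Lie bracket on $\Lg$ is forced to be $[x,y]_{\Lg}=\{x,y\}+\{\phi(x),y\}+\{x,\phi(y)\}$. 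So the proof splits into three tasks: (i) check that the $\phi$ in the statement is a Rota-Baxter operator of weight $1$ on $\Ln$; (ii) substitute the values of $\phi$ into the two formulas $x\cdot y=\{\phi(x),y\}$ and $[x,y]_{\Lg}=\{x,y\}+\{\phi(x),y\}+\{x,\phi(y)\}$ to recover the two displayed tables; (iii) identify $\Lg$ up to isomorphism.

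Task~(i) is where essentially all the work sits, and I would organise it using the fact that $\phi$ is supported on very few basis vectors: $\phi(e_2)=-e_2$, $\phi(e_5)=-e_3-e_5$, and $\phi(e_i)=0$ for $i\in\{1,3,4,6\}$. Hence both sides of the Rota-Baxter identity evaluated on a pair $(e_i,e_j)$ vanish unless $\{i,j\}$ meets $\{2,5\}$, which leaves only a short list of pairs to verify, and each verification is an elementary bracket computation inside one of the two copies of $\Ls\Ll_2(\C)$ (using $[E_{12},E_{21}]=E_{11}-E_{22}$, $[E_{11}-E_{22},E_{12}]=2E_{12}$, and so on), together with the vanishing of all brackets across the two simple summands. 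Task~(ii) is then pure substitution.

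For task~(iii) I would read off from the bracket table of $\Lg$ that $\Li=\langle e_1,e_2,e_4\rangle$ is an abelian ideal and $\Lh=\langle e_3,e_5,e_6\rangle$ an abelian subalgebra, and that $e_1,e_2,e_4$ are simultaneous eigenvectors of $\ad(e_3),\ad(e_5),\ad(e_6)$ with eigenvalue triples $(2,0,0)$, $(-2,2,0)$, $(0,0,2)$ respectively. These triples form a basis of $\C^3$, so one can choose $h_1=\frac{1}{2}e_3$, $h_2=\frac{1}{2}(e_3+e_5)$, $h_3=\frac{1}{2}e_6$ in $\Lh$ with $[h_1,e_1]=e_1$, $[h_2,e_2]=e_2$, $[h_3,e_4]=e_4$ and all remaining brackets among $\{e_1,e_2,e_4,h_1,h_2,h_3\}$ equal to zero. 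Thus $\Lg=\langle e_1,h_1\rangle\oplus\langle e_2,h_2\rangle\oplus\langle e_4,h_3\rangle$ is a direct sum of three ideals, each isomorphic to the non-abelian two-dimensional Lie algebra $\Lr_2(\C)$, giving $\Lg\cong \Lr_2(\C)\oplus\Lr_2(\C)\oplus\Lr_2(\C)$.

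The main (though still routine) obstacle is the Rota-Baxter verification in task~(i): it is the only non-automatic step, and some care with signs is needed because of the term $\phi(e_5)=-e_3-e_5$, which mixes the two $\Ls\Ll_2(\C)$-summands. One could instead avoid \cite{BU59} and verify \eqref{post1}--\eqref{post3} for $x\cdot y=\{\phi(x),y\}$ directly, but that is considerably longer, so I would keep the Rota-Baxter reformulation. Task~(iii) is quick; the only point worth noting is that the three $\ad$-eigenvalue triples are linearly independent, which is precisely what makes $\Lg$ split as a sum of three copies of $\Lr_2(\C)$.
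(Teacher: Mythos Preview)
Your plan is correct and would yield a complete proof, but it diverges from the paper's argument in two places that are worth pointing out.

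For task~(i) the paper does \emph{not} check the Rota--Baxter identity by hand. It instead recognises that $\Ln=\Ln_1\dotplus\Ln_2$ with $\Ln_1=\langle e_1,e_3,e_4,e_6\rangle$ and $\Ln_2=\langle e_2,e_3+e_5\rangle$ is a direct vector-space decomposition into \emph{subalgebras} of $\Ln$, and then invokes Proposition~2.7 of \cite{BU59}: the negative projection $R(n_1+n_2)=-n_2$ onto such a summand is automatically a Rota--Baxter operator of weight~$1$. This explains where $\phi$ comes from and eliminates all case-checking. Your direct verification is perfectly valid; just be sure to justify that for $i,j\notin\{2,5\}$ the right-hand side $\phi(\{e_i,e_j\})$ vanishes as well (it does, since those brackets land in $\langle e_1,e_4\rangle\subset\ker\phi$). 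For task~(iii) the paper again exploits the same decomposition: with $R$ as above one has $[x,y]=\{x,y\}+\{R(x),y\}+\{x,R(y)\}$, which vanishes for $x\in\Ln_1$, $y\in\Ln_2$, equals $\{x,y\}$ on $\Ln_1$ and $-\{x,y\}$ on $\Ln_2$; hence $\Lg=\Ln_1\oplus\Ln_2$ as Lie ideals, and visibly $\Ln_1\cong\Lr_2(\C)^2$, $\Ln_2\cong\Lr_2(\C)$.

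Your alternative identification of $\Lg$ via the abelian ideal $\langle e_1,e_2,e_4\rangle$ also works, and your choices $h_1=\tfrac12 e_3$, $h_2=\tfrac12(e_3+e_5)$, $h_3=\tfrac12 e_6$ do give the desired splitting. But the eigenvalue triples you record are inconsistent with the bracket table: from $[e_1,e_5]=2e_1$ and $[e_2,e_3]=0$ one gets $(2,-2,0)$ for $e_1$ and $(0,2,0)$ for $e_2$, not $(2,0,0)$ and $(-2,2,0)$. With the triples as you wrote them your $h_2$ would \emph{not} annihilate $e_1$; with the corrected triples it does, so the conclusion survives once this slip is fixed.
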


\begin{proof}
By Corollary $2.15$ in \cite{BU59}, post-Lie algebra structures on $(\Lg,\Ln)$ are in bijection to Rota-Baxter operators
of weight $\la=1$ on $\Ln$, since $\Ln$ is complete. We can write $\Ln=\Ln_1\dotplus \Ln_2$ as the direct vector space
sum of the two subalgebras
\[
\Ln_1=\langle e_1,e_3,e_4,e_6\rangle, \; \Ln_2=\langle e_2,e_3+e_5\rangle.
\]
By Proposition $2.7$ in \cite{BU59} we know that $R(n_1+n_2)=-n_2$ for all $n_1\in \Ln_1,n_2\in \Ln_2$ defines a Rota-Baxter operator
of weight $1$ on $\Ln$ with associated post-Lie algebra structure given by $x\cdot y=\{R(x),y\}$. So we have
$R(e_1)=R(e_3)=R(e_4)=R(e_6)=0$ and $R(e_2)=-e_2$, $R(e_3+e_5)=-e_3-e_5$.
This gives the above matrix $\phi$ for the operator $R$. One verifies the Lie brackets for $\Lg$ coming from
$[x,y]=x\cdot y-y\cdot x+\{x,y\}$ and the explicit post-Lie algebra structure with respect to the basis of $\Ln$. We have
the decomposition of $\Lg$ into ideals
\begin{align*}
  \Lg & = \Ln_1\oplus \Ln_2 \\
      & = \langle e_1,e_3, e_4,e_6\rangle \oplus \langle e_2,e_3+e_5\rangle \\
      & \cong  \Lr_2(\C)\oplus \Lr_2(\C)\oplus \Lr_2(\C).
\end{align*}
\end{proof}

\section{Reductive Lie algebras with one-dimensional center}

In this section we study the existence of post-Lie algebra structures on pairs $(\Lg,\Ln)$, where $\Lg$
is reductive with $1$-dimensional center with $\dim (\Lg)\ge 2$. It is well known that
there are no post-Lie algebra structures on pairs $(\Lg,\Ln)$ where $\Lg$ is semisimple
and $\Ln$ is abelian. Hence it is natural to consider the case, where $\Lg$ is reductive 
with a $1$-dimensional center and $\Ln$ is solvable, nilpotent or abelian. For an abelian Lie algebra 
$\Ln$, post-Lie algebra structures on $(\Lg,\Ln)$ are pre-Lie algebra structures on $\Lg$. For the case 
$\Lg=\mathfrak{gl}_n(\C)$ there exist pre-Lie algebra structures, and one can even classify them, 
see \cite{BU4}. So the question is, what we can say when $\Ln$ is solvable or nilpotent.
We have the following result.

\begin{thm}\label{5.1}
Let $(\Lg,\Ln)$ be a pair of Lie algebras, where $\Lg$ is reductive with $1$-dimensional 
center, and $\Ln$ is solvable non-nilpotent. Then there is no post-Lie algebra structure on $(\Lg,\Ln)$.
\end{thm}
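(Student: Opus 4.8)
The plan is to exploit the representation $L\colon \Lg\to\Der(\Ln)$ attached to a post-Lie algebra structure, together with the structure of $\Lg=\Ls\oplus\C z$ where $\Ls$ is semisimple and $z$ spans the center. Since $\Ln$ is solvable, $\Der(\Ln)$ has a solvable radical whose quotient is reductive, and the key observation is that $L(\Ls)$ maps into $\Der(\Ln)$ with image a semisimple subalgebra acting on the solvable Lie algebra $\Ln$. First I would recall from Proposition~$2.11$ of \cite{BU41} that there is an embedding $\phi\colon\Lg\hookrightarrow\Ln\rtimes\Der(\Ln)$ with $p\circ\phi$ a vector space isomorphism onto $\Ln$; write $\phi(x)=(x, L(x))$ after identifying the underlying spaces, so that $\Ln\rtimes L(\Lg)=\phi(\Lg)\dotplus L(\Lg)$ is a direct vector-space sum of subalgebras, exactly as in the proof of Theorem~\ref{3.3}.

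Next I would analyze the Levi decomposition of the Lie algebra $\Lq:=\Ln\rtimes L(\Lg)$. Its solvable radical contains $\Ln$ (which is solvable and an ideal) and, since $L(z)\in\Der(\Ln)$ commutes with $L(\Ls)$ and $\Lg$ has $1$-dimensional center, $\rad(\Lq)$ is $\Ln$ together with a contribution from $L(z)$; in particular $\Lq$ is \emph{not} semisimple because $\Ln\neq 0$ is a solvable ideal. A Levi subalgebra of $\Lq$ can be taken inside $L(\Lg)$, and $\phi(\Ls)$ is a semisimple subalgebra of $\Lq$. The crucial point is then to run the machinery of Section~$2$: if a post-Lie structure existed, $\Lq$ would be a strongly disemisimple Lie algebra (or would contain a forbidden configuration), contradicting Theorem~\ref{2.6} which forces $\Lq$ to be semisimple — but $\Lq$ contains the nonzero solvable ideal $\Ln$. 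The technical work is to handle the $1$-dimensional center: $\phi(\Lg)$ itself is reductive, not semisimple, so one cannot apply Theorem~\ref{2.6} directly to $\phi(\Lg)\dotplus L(\Lg)$. Instead I would pass to the derived algebra / the semisimple parts, showing that $\phi(\Ls)\dotplus L(\Ls)$ sits inside $\Lq$ as a direct vector-space sum of two semisimple subalgebras, deduce via Theorem~\ref{2.6} that the Lie algebra it generates is semisimple, and then locate where the solvable ideal $\Ln$ must go.

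The main obstacle will be pinning down what happens to the radical direction $L(z)$ and $\phi(z)$, and in particular ruling out that $L(z)$ acts \emph{non-semisimply} (nilpotently) on $\Ln$, which is where ``solvable non-nilpotent'' rather than merely ``solvable'' enters: if $\Ln$ were nilpotent one loses control (indeed the theorem is known to fail, or at least is open, in the nilpotent case — cf.\ the discussion preceding Proposition~$5.5$). So the argument must use that a solvable non-nilpotent $\Ln$ has $\rad(\Ln)=\nil(\Ln)$ strictly smaller than $\Ln$, equivalently that $\Ln$ acts on itself with a nonzero semisimple part, forcing the image of $L$ to be large enough that the dimension count $\dim\phi(\Lg)+\dim L(\Lg)-\dim(\phi(\Lg)\cap L(\Lg))=\dim\Lq=\dim\Ln+\dim L(\Lg)$ becomes incompatible with $\Lq$ being non-semisimple. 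I expect the cleanest route is: show $L$ is injective on a complement of $\ker L$ intersected with $\Ls$, reduce to the semisimple case handled by Theorem~\ref{3.3} applied to $(\Ls', \Ln')$ for suitable quotients, and derive that $\Ln$ must then be reductive, hence (being solvable) abelian, hence nilpotent — contradicting the hypothesis. I would close by remarking that this simultaneously shows the obstruction is genuinely about the nilpotent case, motivating Section~$5$'s subsequent restriction to $\mathfrak{gl}_n(\C)$ and $2$-step nilpotent $\Ln$.
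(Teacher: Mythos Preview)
Your proposal does not reach a proof, and the approach is quite different from (and much more complicated than) what the paper does. There are two concrete problems. First, the attempt to apply Theorem~\ref{2.6} to $\phi(\Ls)\dotplus L(\Ls)$ inside $\Lq=\Ln\rtimes L(\Lg)$ fails because this vector-space sum is \emph{not} a subalgebra in general: computing in $\Ln\rtimes\Der(\Ln)$ one finds $[\phi(s_1),(0,L(s_2))]=(-s_2\cdot s_1,\,L([s_1,s_2]))$, and there is no reason for $s_2\cdot s_1$ to lie in the subspace $\Ls\subset V$. So you cannot invoke the strongly-disemisimple machinery here. Second, the fallback ``cleanest route'' via Theorem~\ref{3.3} on suitable quotients is only a sketch; post-Lie structures do not pass to quotients in any obvious way, and you have not specified what $(\Ls',\Ln')$ should be. There is also a small factual slip: for solvable $\Ln$ one has $\rad(\Ln)=\Ln$, not $\rad(\Ln)=\nil(\Ln)$; the relevant strict inclusion is $\nil(\Ln)\subsetneq\Ln$.

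The paper's argument is elementary and bypasses Section~2 entirely. The key input you are missing is the classical fact that every derivation of a Lie algebra maps the solvable radical into the nilradical. Since $\Ln$ is solvable this gives $L(x)(\Ln)\subseteq\nil(\Ln)$ for every $x$, hence $\Ln\cdot\Ln\subseteq\nil(\Ln)$; together with $\{\Ln,\Ln\}\subseteq\nil(\Ln)$ and axiom~\eqref{post1} one obtains $[\Lg,\Lg]\subseteq\nil(\Ln)$. Now the $1$-dimensional center forces $\dim[\Lg,\Lg]=n-1$, while non-nilpotency of $\Ln$ forces $\dim\nil(\Ln)\le n-1$; hence $[\Lg,\Lg]=\nil(\Ln)$ as subspaces of $V$. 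The post-Lie structure therefore restricts to the pair $([\Lg,\Lg],\nil(\Ln))$, which is impossible by Theorem~4.2 of \cite{BU44} since $[\Lg,\Lg]$ is semisimple. This is a two-line dimension count once you know where derivations land, rather than an application of the disemisimple decomposition theorem.
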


\begin{proof}
Assume that $x\cdot y$ is a post-Lie algebra structure on $(\Lg,\Ln)$. Denote by $V$ the underlying 
vector space of $\Lg$ and $\Ln$ with $\dim(V)=n$. All left multiplications $L(x)$ are derivations 
of $\Ln$ and hence map $\Ln$ into the nilradical $\nil(\Ln)$. Indeed, for any Lie algebra $\Lh$ we have 
$D(\rad(\Lh))\subseteq \nil(\Lh))$ for all $D\in \Der(\Lh)$, and for solvable $\Lh$ we clearly have
$\Lh=\rad(\Lh)$.
So we have
\[
\Ln\cdot \Ln=L(\Ln)(\Ln) \subseteq \nil(\Ln).
\]
Since $\Ln$ is solvable, $\{\Ln,\Ln\}$ is a nilpotent ideal in $\Ln$, so that
$\{\Ln,\Ln\}\subseteq \nil(\Ln)$. Hence for $x,y\in V$ the Lie bracket of $[x,y]$ in $\Lg$ satisfies
\[
[x,y]=x\cdot y-y\cdot x+\{x,y\} \in \nil (\Ln).
\]
So we have $[\Lg,\Lg]\subseteq \nil(\Ln)$. However, we have $\dim ([\Lg,\Lg])=n-1$, because $\Lg=[\Lg,\Lg]\oplus Z(\Lg)$.
It follows that $\dim \nil(\Ln)\ge n-1$.
Since $\Ln$ is not nilpotent, $\dim \nil(\Ln)\le n-1$. Together we obtain that $\dim \nil(\Ln)=n-1$ and $[\Lg,\Lg]=\nil(\Ln)$
as vector spaces. But this implies that the post-Lie algebra structure on $(\Lg,\Ln)$ restricts to a post-Lie algebra
structure on the pair $([\Lg,\Lg],\nil(\Ln))$, which is impossible by Theorem $4.2$ of \cite{BU44}, because $[\Lg,\Lg]$
is semisimple.
\end{proof}

In particular, there are no post-Lie algebra structures on $(\mathfrak{gl}_n(\C),\Ln)$ for solvable non-nilpotent Lie algebras
$\Ln$. However, this is no longer true if we replace $\Lg\Ll_n(\C)$ by $\Lg\Ll_n(\C)\oplus \C$, i.e., if we consider a
reductive Lie algebra with $\dim (Z(\Lg))\ge 2$. Indeed, we have the following result.

\begin{prop}\label{5.2}
For any $n\ge 2$ there is a post-Lie algebra structure on the pair $(\Lg,\Ln)$, where
$\Lg=\Lg\Ll_n(\C)\oplus \C$ with basis $(y_1,\ldots ,y_{n^2},x)$, where $x$ spans $\C$, and let
$\Ln$ be the $2$-step solvable Lie algebra with Lie brackets $\{x,y_i\}=y_i$ for $1\le i\le n^2$. 
\end{prop}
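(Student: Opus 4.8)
The plan is to exhibit an explicit post-Lie algebra structure on the pair $(\Lg,\Ln)$ and simply verify the three defining identities \eqref{post1}, \eqref{post2}, \eqref{post3}. Since $\Lg=\Lg\Ll_n(\C)\oplus\C$ has a natural basis adapted to the problem, the product is best described through the left multiplication operator $L$. Write $V$ for the common underlying vector space. The natural guess is to build the structure out of the representation $L\colon\Lg\ra\Der(\Ln)$ induced by the inner derivations of $\Ln$: since $\Ln$ is defined by $\{x,y_i\}=y_i$, the adjoint action $\ad_\Ln(x)$ is the identity on the ideal $\langle y_1,\dots,y_{n^2}\rangle$ and kills $x$. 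So I would try $x\cdot v=\{x,v\}$, i.e.\ $L(x)=\ad_\Ln(x)$, and $L(y_i)=0$ for all $i$; equivalently $y_i\cdot v=0$ and $x\cdot y_i=y_i$, $x\cdot x=0$. This is an \emph{inner} post-Lie algebra structure coming from the Rota--Baxter-type construction, and one should check it reproduces the correct bracket of $\Lg$ via \eqref{post1}.

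First I would spell out the three identities. For \eqref{post1}: $x\cdot y_i-y_i\cdot x=y_i-0=y_i$, and $[x,y_i]-\{x,y_i\}$ in $\Lg$ must then equal $y_i$; since in $\Lg=\Lg\Ll_n(\C)\oplus\C$ the element $x$ is central, $[x,y_i]=0$, so we need $\{x,y_i\}=-y_i$ — which forces a sign adjustment. The clean fix is to take $L(x)=-\ad_\Ln(x)$, i.e.\ $x\cdot y_i=-y_i$; then \eqref{post1} reads $-y_i-0=0-(-y_i)$, correct, and for the pair $(x,x)$ and $(y_i,y_j)$ both sides vanish. (This is exactly the $R(n_1+n_2)=-n_2$ shape of Proposition 2.7 in \cite{BU59}, with $\Ln_1=\langle y_1,\dots,y_{n^2}\rangle$ the nilradical and $\Ln_2=\langle x\rangle$, so one could alternatively just cite that proposition.) For \eqref{post2}: both $L(y_i)=0$, so the right side is $x$-free unless $[x,y]\in\s(x)$, and one checks $L([u,v])=L(u)L(v)-L(v)L(u)$ degenerates to $0=0$ on all basis pairs except those involving $x$, where $[x,\cdot]_\Lg=0$ and the commutator of operators also vanishes since $L(x)^2=L(x)$ up to scalar commutes with itself. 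For \eqref{post3}: each $L(w)$ must be a derivation of $\Ln$; $L(y_i)=0$ trivially is, and $L(x)=-\ad_\Ln(x)$ is an inner derivation, hence a derivation.

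The bookkeeping step I expect to be the only mildly delicate one is confirming that the bracket recovered from $[u,v]=u\cdot v-v\cdot u+\{u,v\}$ is genuinely the bracket of $\Lg\Ll_n(\C)\oplus\C$ and not some other Lie algebra: on the $y_i$ among themselves the post-Lie product is zero and $\{y_i,y_j\}=0$, so $[y_i,y_j]_\Lg=0$, meaning the $y_i$ must span an \emph{abelian} subalgebra of $\Lg$. That is consistent only if we identify $\langle y_1,\dots,y_{n^2}\rangle$ with a Cartan-like abelian complement — but $\Lg\Ll_n(\C)$ has no abelian subalgebra of dimension $n^2$. Hence the basis labelling in the statement must be read as: the $y_i$ are a basis of $\Lg\Ll_n(\C)$ as a \emph{vector space}, the bracket on them being whatever is needed, and the post-Lie product is $u\cdot v=\{\rho(u),v\}$ for a suitable linear map $\rho$ rather than the crude $L$ above. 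So the actual proof should proceed as in Propositions \ref{4.3} and \ref{4.4}: pick the decomposition $\Ln=\Ln_1\dotplus\Ln_2$ with $\Ln_1=\nil(\Ln)=\langle y_1,\dots,y_{n^2}\rangle$ and $\Ln_2=\langle x\rangle$ (a direct vector-space sum of subalgebras since $\{x,y_i\}=y_i\in\Ln_1$), invoke Proposition 2.7 of \cite{BU59} to get the Rota--Baxter operator $R(n_1+n_2)=-n_2$ of weight $1$, then use Corollary 2.15 of \cite{BU59} — valid because $\Ln$ is complete when $n\ge 2$ (trivial center, and all derivations inner, which should be checked or cited) — to transport it to a post-Lie algebra structure on $(\Lg,\Ln)$. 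Finally I would verify that the resulting $\Lg$, with bracket $[u,v]=u\cdot v-v\cdot u+\{u,v\}$, splits as $\Ln_1\oplus\Ln_2$ into ideals with $\Ln_1\cong\Lg\Ll_n(\C)$ and $\Ln_2\cong\C$, hence $\Lg\cong\Lg\Ll_n(\C)\oplus\C$; this last identification of the two summands, and the check that the restriction to $\Ln_1$ really reconstitutes $\Lg\Ll_n(\C)$ rather than an isomorphic-looking but distinct Lie algebra, is the step most worth doing carefully.
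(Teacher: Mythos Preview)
Your Rota--Baxter approach has a genuine gap: it produces the wrong $\Lg$. The decomposition $\Ln=\Ln_1\dotplus\Ln_2$ with $\Ln_1=\langle y_1,\dots,y_{n^2}\rangle$ and $\Ln_2=\langle x\rangle$, together with $R(n_1+n_2)=-n_2$, gives exactly the product you already wrote down and rejected: $y_i\cdot v=0$, $x\cdot y_i=-y_i$. Hence $[y_i,y_j]=y_i\cdot y_j-y_j\cdot y_i+\{y_i,y_j\}=0$ and $[x,y_i]=-y_i-0+y_i=0$, so the resulting $\Lg$ is abelian, not $\Lg\Ll_n(\C)\oplus\C$. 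The ``verification step most worth doing carefully'' fails outright. More generally, \emph{no} inner structure $u\cdot v=\{\rho(u),v\}$ can work: since $\{-,y_j\}$ has image $\C y_j$, one gets $y_i\cdot y_j-y_j\cdot y_i=c_iy_j-c_jy_i$ for scalars $c_i$, and the $\Lg\Ll_n$ bracket is not of this form. (Separately, $\Ln$ is not complete --- any $A\in\Lg\Ll_{n^2}(\C)$ acting on $\langle y_1,\dots,y_{n^2}\rangle$ and killing $x$ is a derivation, while the inner ones only give scalars there --- so Corollary~2.15 of \cite{BU59} is not available.)

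The missing idea is precisely what the paper supplies: the products $y_i\cdot y_j$ must be \emph{nonzero} and carry the $\Lg\Ll_n$ bracket. The paper takes a pre-Lie (left-symmetric) product $y_i\cdot y_j$ on $\Lg\Ll_n(\C)$, which exists by \cite{BU4}; this is a post-Lie structure on $(\Lg\Ll_n(\C),\La)$ with $\La=\langle y_1,\dots,y_{n^2}\rangle$ abelian, and by \eqref{pre1} it satisfies $y_i\cdot y_j-y_j\cdot y_i=[y_i,y_j]_{\Lg\Ll_n}$. One then extends by $x\cdot y_i=-y_i$, $y_i\cdot x=x\cdot x=0$ and checks \eqref{post1}--\eqref{post3} directly: the $y$--$y$ part of \eqref{post1} now holds by the pre-Lie identity, $L(y_i)$ is the block $\begin{pmatrix}\ell(y_i)&0\\0&0\end{pmatrix}\in\Der(\Ln)$ since any endomorphism of $\La$ fixing $x$ is a derivation of $\Ln$, and $L(x)$ commutes with all $L(y_i)$ so that \eqref{post2} reduces to the pre-Lie representation property on the $y$'s. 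The existence of a pre-Lie structure on $\Lg\Ll_n(\C)$ is the nontrivial external input you are missing.
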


\begin{proof}
By definition, $\La=\langle y_1,y_2,\ldots ,y_{n^2}\rangle$ is an abelian ideal in $\Ln$ with $\Ln=\La\rtimes \langle x\rangle$,
where $x$ acts on $\La$ by the derivation $\id_{\La}$, i.e., with $\ad(x)_{\mid \La}=\id_{\La}$.  Choose a pre-Lie algebra
product $y_i\cdot y_j$ on $\Lg\Ll_n(\C)$, see \cite{BU4}. This is a post-Lie algebra structure on the pair $(\Lg\Ll_n(\C), \La)$.
Now extend this product to the pair $(\Lg\Ll_n(\C)\oplus \C, \Ln)$ by
\begin{align*}
x\cdot x & = 0,\\
y_i\cdot x & = 0 \\
x\cdot y_i & = -y_i=-\{x,y_i\}
\end{align*}
for all $1\le i \le n^2$. The axioms for a post-Lie algebra structure are satisfied. First, consider the identity
\[
u\cdot v-v\cdot u = [u,v]-\{u,v\}.
\]
We have to check it for all basis vectors $y_i,x$. It holds for all $u=y_i,v=y_j$, because $y_i\cdot y_j$ is a
pre-Lie algebra structure on $\Lg\Ll_n(\C)$ with $\{y_i,y_j\}=0$. So we only need to consider the case that
$u$ or $v$ is equal to $x$. By symmetry we may assume that $u=x$. Then for $v=x$ it trivially is true, and for $v=y_i$
we have
\[
x\cdot y_i-y_i\cdot x=-y_i=[x,y_i]-\{x,y_i\}.
\]
The third identity is equivalent to the fact that all $L(v)$ are derivations of $\Ln$ for all $v$. We have to check this
for $v=y_i$ and $v=x$. In these cases, the last row and last column of $L(v)$ are zero. Let us consider
the remaining matrices of size $n^2$. For $v=y_i$, this matrix is a derivation of $\La$,
since the product is a post-Lie algebra structure on $(\Lg\Ll_n(\C),\La)$. For $v=x$, this matrix is $-I$. In both cases
it follows that $L(v)\in \Der(\Ln)$. \\[0.2cm]
The second identity is equivalent to the fact that $L\colon x\ra L(x)$ is a Lie algebra representation of $\Lg$.
This is again obvious from the form of the operators $L(y_i)$ and $L(x)$. Indeed, $L(x)$ commutes with all operators
$L(y_i)$.
\end{proof}

We would like to prove a similar result as Theorem $\ref{5.1}$ with $\Ln$ nilpotent, non-abelian.
We start with the case of $\Lg=\mathfrak{gl}_2(\C)$.

\begin{prop}\label{5.3}
Let $(\Lg,\Ln)$ be a pair of Lie algebras with $\Lg=\mathfrak{gl}_2(\C)$ and $\Ln$ nilpotent, non-abelian. Then there is
no post-Lie algebra structure on $(\Lg,\Ln)$.  
\end{prop}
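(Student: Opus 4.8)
The plan is to extract from a hypothetical post-Lie structure an action of $\mathfrak{sl}_2(\C)$ on $\Ln$ by derivations, to pin down $\Ln$ up to isomorphism, and then to restrict the structure to the simple pair $(\mathfrak{sl}_2(\C),\Ln_3(\C))$, where rigidity is already available. So assume $x\cdot y$ is a post-Lie structure on $(\mathfrak{gl}_2(\C),\Ln)$ and let $L\colon\Lg\to\Der(\Ln)$ be the associated homomorphism of Lie algebras. Then $\ker L$ is an ideal of $\mathfrak{gl}_2(\C)$, hence $0$, $Z(\Lg)$, $\mathfrak{sl}_2(\C)$ or $\mathfrak{gl}_2(\C)$. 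If $\mathfrak{sl}_2(\C)\subseteq\ker L$, then for $w,w'$ in $W:=[\Lg,\Lg]$ identity \eqref{post1} gives $\{w,w'\}=[w,w']-w\cdot w'+w'\cdot w=[w,w']$, so $W$ would be a subalgebra of $\Ln$ isomorphic to $\mathfrak{sl}_2(\C)$ --- impossible, since $\Ln$ is nilpotent. Hence $L|_{\mathfrak{sl}_2(\C)}$ is injective and $\Lh:=L(\mathfrak{sl}_2(\C))\cong\mathfrak{sl}_2(\C)$ is a subalgebra of $\Der(\Ln)$. Up to isomorphism the four-dimensional non-abelian nilpotent complex Lie algebras are $\Ln_3(\C)\oplus\C$ and the filiform algebra with $[e_1,e_2]=e_3,\ [e_1,e_3]=e_4$; the derivation algebra of the latter is solvable (its image in $\mathfrak{gl}(\Ln/[\Ln,\Ln])$ is triangular and its kernel consists of nilpotent derivations), hence contains no copy of $\mathfrak{sl}_2(\C)$. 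Therefore $\Ln\cong\Ln_3(\C)\oplus\C$.

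Next I would fix adapted coordinates. The subspaces $[\Ln,\Ln]$ and $Z(\Ln)$ are characteristic, hence $\Lh$-submodules, and $[\Ln,\Ln]\subseteq Z(\Ln)$ (otherwise the lower central series would not terminate). Since $\Lh$ is reductive and acts faithfully it cannot act by nilpotent operators, so $\Ln/Z(\Ln)$ must be the two-dimensional irreducible $\mathfrak{sl}_2(\C)$-module; by complete reducibility $\Ln=M_1\oplus Z(\Ln)$ with $M_1$ this irreducible and $Z(\Ln)$ two-dimensional trivial, containing $[\Ln,\Ln]$. Choose an $\mathfrak{sl}_2(\C)$-triple $(H,E,F)$ in $\Lh$ and a basis $c_1,c_2$ of $M_1$ with $Hc_1=c_1,\ Hc_2=-c_2,\ Ec_2=c_1,\ Ec_1=0,\ Fc_1=c_2,\ Fc_2=0$. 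Then $\{c_1,c_2\}$ spans $[\Ln,\Ln]$ and is non-zero, so after rescaling we may set $c_3:=\{c_1,c_2\}$ and complete to a basis $c_3,c_4$ of $Z(\Ln)$; all remaining brackets of $\Ln$ then vanish. Finally let $h,e,f\in W$ be the preimages of $H,E,F$ under the isomorphism $L|_W\colon W\to\Lh$; they form a basis of $W$ and satisfy $[h,e]=2e,\ [h,f]=-2f,\ [e,f]=h$ in $\Lg$.

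Then I would apply \eqref{post1} in the form $[x,y]=\{x,y\}+L(x)y-L(y)x$ to the pairs $(h,e),(h,f),(e,f)$. On the right-hand sides the bracket $\{\,,\}$ takes values in $\langle c_3\rangle$ while $L(h)=H,\ L(e)=E,\ L(f)=F$ take values in $\langle c_1,c_2\rangle$, so each of these three Lie brackets has vanishing $c_4$-coordinate. Comparing $c_4$-coordinates in $2e=[h,e]$, $-2f=[h,f]$ and $h=[e,f]$ shows that $h,e,f$ all lie in $\langle c_1,c_2,c_3\rangle$; as they span the three-dimensional space $W$ we get $W=\langle c_1,c_2,c_3\rangle$. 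But $\langle c_1,c_2,c_3\rangle$ is a subalgebra of $\Ln$ isomorphic to the Heisenberg algebra $\Ln_3(\C)$, and it is stable under every $L(x)$ with $x\in W$, because $\Lh$ preserves $\langle c_1,c_2\rangle$ and annihilates $c_3$; hence it is closed under the product $\cdot$.

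Consequently $x\cdot y$ restricts to a post-Lie structure on the pair $(\mathfrak{sl}_2(\C),\Ln_3(\C))$, and since $\mathfrak{sl}_2(\C)$ is simple, the rigidity result for simple $\Lg$ recorded above (see \cite{BU51}) forces $\Ln_3(\C)\cong\mathfrak{sl}_2(\C)$ --- absurd. Hence no post-Lie structure on $(\mathfrak{gl}_2(\C),\Ln)$ exists. I expect the genuinely delicate point to be this final step: once one has reduced to $\Ln\cong\Ln_3(\C)\oplus\C$, a direct dimension count inside $\Ln\rtimes\Der(\Ln)$ of the kind used for semisimple $\Lg$ in Theorem \ref{3.3} does not close, and what works instead is to locate the Heisenberg subalgebra $W$ and restrict the structure to the simple pair above; the rest is routine bookkeeping with the post-Lie axioms and with the list of four-dimensional nilpotent Lie algebras.
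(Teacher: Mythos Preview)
Your proof is correct and follows essentially the same strategy as the paper's: rule out the filiform case via solvability of $\Der(\Ln_4)$, then for $\Ln\cong\Ln_3(\C)\oplus\C$ use the $\mathfrak{sl}_2(\C)$-action through $L$ to decompose $\Ln$ as a $2$-dimensional irreducible plus a trivial $Z(\Ln)$, locate the Heisenberg subalgebra $\langle c_1,c_2,c_3\rangle$, show via axiom \eqref{post1} that $[\Lg,\Lg]$ coincides with it as a vector space, and restrict to obtain a forbidden post-Lie structure on $(\mathfrak{sl}_2(\C),\Ln_3(\C))$. The only differences are cosmetic: you work in explicit coordinates $c_1,\dots,c_4$ and with a chosen $\mathfrak{sl}_2$-triple, whereas the paper argues abstractly with the $\Ls$-invariant complement $U$ of $Z(\Ln)$; and for the final contradiction you invoke the rigidity theorem for simple $\Lg$ from \cite{BU51}, while the paper cites the equivalent non-existence result (semisimple $\Lg$, solvable $\Ln$) from \cite{BU44}. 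One small point of presentation: your sentence ``since $\Lh$ acts faithfully it cannot act by nilpotent operators, so $\Ln/Z(\Ln)$ must be the two-dimensional irreducible'' tacitly uses that $Z(\Ln)$ is already a trivial $\Lh$-module (which follows since it contains the $1$-dimensional submodule $\{\Ln,\Ln\}$); stating this first would make the deduction cleaner.
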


\begin{proof}
There are two non-abelian nilpotent Lie algebras of dimension $4$ up to isomorphism, namely $\Ln=\Ln_4(\C)$ and
$\Ln=\Ln_3(\C)\oplus \C$. Here $\Ln_4(\C)$ is the standard graded filiform Lie algebra, and $\Ln_3(\C)$ is the Heisenberg
Lie algebra. For  $\Ln=\Ln_4(\C)$, the Lie algebra $\Der(\Ln)$ is solvable, hence also the semidirect product $\Ln\rtimes \Der(\Ln)$.
Suppose that there exists a post-Lie algebra structure on $(\Lg,\Ln)$. Then by Proposition $2.11$ in \cite{BU41} there is an injective
homomorphism $\Lg \hookrightarrow \Ln\rtimes \Der(\Ln)$ given by $x\mapsto (x,L(x))$. It follows that $\Lg$ is solvable. However,
$\Lg=\mathfrak{gl}_2(\C)$ is not solvable, a contradiction. \\[0.2cm]
Secondly, let us assume that $\Ln=\Ln_3(\C)\oplus \C$. Suppose that $x\cdot y$ is a post-Lie algebra structure on $(\Lg,\Ln)$ and let
$\Ls=\mathfrak{sl}_2(\C)$. Then $\Ls$ acts on $\Ln$ by the restriction of the homomorphism $L\colon \Lg\ra \Der(\Ln)$ to $\Ls$.
We claim that $\Ls$ acts trivially on $Z(\Ln)$. Indeed, let $\{\Ln,\Ln\}=\langle z \rangle \subset Z(\Ln)=\langle z,t \rangle$. By Weyl's
Theorem, there is an $\Ls$-invariant complement $W$ of $\{\Ln,\Ln\}$ in $Z(\Ln)$. So we have $Z(\Ln)=\{\Ln,\Ln\}\oplus W$ as $\Ls$-modules
with $\dim (\{\Ln,\Ln\})=\dim(W)=1$. Since $\Ls$ is semisimple, it acts trivially on $W$ and $\{\Ln,\Ln\}$, so that $\Ls\cdot Z(\Ln)=0$.
Again by Weyl's theorem we have
\[
\Ln=Z(\Ln)\oplus U
\]
as $\Ls$-modules with a complement $U=\langle u,v\rangle$. This is a subalgebra of $\Ln$. Suppose that $\{U,U\}=0$. Then
$\{\Ln,\Ln\}=\{U\oplus Z(\Ln),U\oplus Z(\Ln)\}=0$, which is a contradiction. It follows that $\{u,v\}=w$ is nonzero
and $\Lh=\langle u,v,w\rangle$ is a subalgebra of $\Ln$ isomorphic to $\Ln_3(\C)$. By construction, $\Ls\cdot \Lh\subseteq \Lh$.
So by the first axiom for a post-Lie structure we have
\[
[x,y]=x\cdot y-y\cdot x+\{x,y\} \in \Lh
\]
for all $x,y\in \Lg$. It follows that $\Ls=[\Ls,\Ls]\subseteq \Lh$, so that both vector spaces for $\Ls$ and $\Lh$ coincide. Hence $x\cdot y$
induces a post-Lie algebra structure on $(\Ls,\Lh)$, where $\Ls$ is semisimple and $\Lh$ is 
nilpotent. This is a contradiction to Theorem $4.2$ of \cite{BU44}.
\end{proof}

We can generalize this result for pairs $(\mathfrak{gl}_n(\C), \Ln)$ where $\Ln$ is $2$-step nilpotent and non-abelian. 
For this we need the following lemma. 

\begin{lem}\label{5.4}
Let $x\cdot y$ be a PA-structure on a pair $(\Lg,\Ln)$, where $\Ln$ is $2$-step nilpotent with Lie bracket $\{x,y\}$.
Then
\[
x\circ y =\frac{1}{2}\{x,y\}+x\cdot y
\]
defines a pre-Lie algebra structure on $\Lg$.
\end{lem}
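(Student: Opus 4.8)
The plan is to verify directly the two defining identities \eqref{pre1} and \eqref{pre2} for the product $x\circ y=\tfrac12\{x,y\}+x\cdot y$, using the three post-Lie axioms together with the hypothesis that $\Ln$ is $2$-step nilpotent, i.e.\ $\{\{a,b\},c\}=0$ for all $a,b,c$. First I would check the commutator identity \eqref{pre1}: we have $x\circ y-y\circ x=\tfrac12\{x,y\}-\tfrac12\{y,x\}+(x\cdot y-y\cdot x)=\{x,y\}+[x,y]-\{x,y\}=[x,y]$ by \eqref{post1} and skew-symmetry of $\{\,,\}$, so this identity holds with no use of the nilpotency assumption.

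The substance is the left-symmetry identity \eqref{pre2}, i.e.\ showing the associator $(x,y,z)_\circ=(x\circ y)\circ z-x\circ(y\circ z)$ is symmetric in $x$ and $y$; equivalently $[x,y]\circ z=x\circ(y\circ z)-y\circ(x\circ z)$. I would expand $x\circ(y\circ z)=\tfrac12\{x,\tfrac12\{y,z\}+y\cdot z\}+x\cdot(\tfrac12\{y,z\}+y\cdot z)$. The term $\tfrac14\{x,\{y,z\}\}$ vanishes because $\Ln$ is $2$-step nilpotent. This leaves $\tfrac12\{x,y\cdot z\}+\tfrac12 x\cdot\{y,z\}+x\cdot(y\cdot z)$. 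Similarly $(x\circ y)\circ z=\tfrac12\{\tfrac12\{x,y\}+x\cdot y,\,z\}+(\tfrac12\{x,y\}+x\cdot y)\cdot z=\tfrac12\{x\cdot y,z\}+\tfrac12\{x,y\}\cdot z+(x\cdot y)\cdot z$, again killing the double-bracket term. Now I subtract the two expansions of $x\circ(y\circ z)-y\circ(x\circ z)$: the pure-product part $x\cdot(y\cdot z)-y\cdot(x\cdot z)$ equals $[x,y]\cdot z$ by \eqref{post2}; the terms $\tfrac12 x\cdot\{y,z\}-\tfrac12 y\cdot\{x,z\}$ can be rewritten using \eqref{post3} (which says $x\cdot\{y,z\}=\{x\cdot y,z\}+\{y,x\cdot z\}$) — so $\tfrac12 x\cdot\{y,z\}=\tfrac12\{x\cdot y,z\}+\tfrac12\{y,x\cdot z\}$, and likewise for the $y$-term; finally $\tfrac12\{x,y\cdot z\}-\tfrac12\{y,x\cdot z\}$ is handled directly. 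Collecting everything, the bracket terms should reorganize exactly into $\tfrac12\{[x,y],z\}+\tfrac12\{x,y\}\cdot z$, which together with $[x,y]\cdot z$ gives $[x,y]\circ z$. The one place needing care is bookkeeping the signs and matching $\{x\cdot y,z\}$-type terms coming from the \eqref{post3} substitution against the $\{x,y\cdot z\}$-type terms; a clean way is to write $\{x,y\cdot z\}=-\{y\cdot z,x\}$ and compare.

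The main obstacle is purely organizational: there are roughly a dozen bracket/product terms to track, and one must invoke \eqref{post3} in the right direction and use $2$-step nilpotency precisely at the two points where a bracket-of-bracket appears. No deeper input is needed — the hypothesis on $\Ln$ is exactly what is required to discard the $\tfrac14\{x,\{y,z\}\}$ and $\tfrac12\{\{x,y\},z\}$ terms that would otherwise obstruct left-symmetry. I would present the computation as a single aligned display of the associator, flagging each equality with the axiom or hypothesis used, rather than as prose.
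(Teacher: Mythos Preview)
Your proposal is correct: the bookkeeping works out exactly as you anticipate, with the $\{x,y\cdot z\}$-type terms from the raw expansion cancelling against those produced by \eqref{post3}, leaving $\tfrac12\{x\cdot y-y\cdot x,z\}=\tfrac12\{[x,y],z\}-\tfrac12\{\{x,y\},z\}$, and the last term vanishes by $2$-step nilpotency. This is precisely the direct verification the paper alludes to; the paper's own proof simply cites Lemma~4.1 and Proposition~4.2 of \cite{BU41} and then remarks that axioms \eqref{pre1} and \eqref{pre2} ``can also be easily verified directly,'' without writing out the computation you have sketched.
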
  

\begin{proof}
This follows from Lemma $4.1$ and Proposition $4.2$ of \cite{BU41}. But the axioms \eqref{pre1} and \eqref{pre2}
for a pre-Lie algebra structure on $\Lg$ can also be easily verified directly.
\end{proof}

Defining the linear operators $\ell(x),L(x),\ad(x)$ by  $L(x)(y)=x\cdot y$, $\ell(x)(y)=x\circ y$ and $\ad(x)(y)=\{x,y\}$
we can rewrite the pre-Lie algebra product as
\[
\ell(x)=\frac{1}{2}\ad(x)+L(x).
\]

\begin{prop}
Let $(\Lg,\Ln)$ be a pair of Lie algebras where $\Lg=\mathfrak{gl}_n(\C)$ and $\Ln$ is $2$-nilpotent and non-abelian.
Then there is no post-Lie algebra structure on $(\Lg,\Ln)$.  
\end{prop}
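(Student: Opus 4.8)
The plan is to reduce, via Lemma~\ref{5.4}, to a question about pre-Lie (left-symmetric) structures on $\mathfrak{gl}_n(\C)$ and then to run a representation-theoretic argument over $\Ls:=\mathfrak{sl}_n(\C)$. So suppose $x\cdot y$ is a post-Lie algebra structure on $(\Lg,\Ln)$ with $\Lg=\mathfrak{gl}_n(\C)$ and $\Ln$ $2$-step nilpotent and non-abelian; let $V$ be the common underlying vector space and set $W=\{\Ln,\Ln\}\neq 0$. By Lemma~\ref{5.4} the product $x\circ y=\tfrac12\{x,y\}+x\cdot y$ is pre-Lie on $\mathfrak{gl}_n(\C)$, with $\ell(x)=\tfrac12\ad(x)+L(x)$, where $\ad$ here denotes the adjoint representation of $\Ln$. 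Since $\Ln$ is $2$-step nilpotent we have $\ad(x)\ad(y)=0$, so $\ell$ and $L$ induce the \emph{same} representation on $W$ and on $V/W$; I would view $V$ as an $\Ls$-module through $L|_\Ls$, which by Weyl's theorem is completely reducible, view $W\subseteq Z(\Ln)$ as an $\Ls$-submodule, and record that $\{W,V\}=0$.

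Next I would observe that, since every $L(x)$ is a derivation of $\Ln$ by \eqref{post3}, the bracket $\{\,,\}\colon\Lambda^2 V\to V$ is a homomorphism of $\Ls$-modules, and hence — as $\{W,V\}=0$ — it descends to a \emph{surjective} $\Ls$-homomorphism $\Lambda^2(V/W)\twoheadrightarrow W$. The goal is then to show that no nonzero such map can exist. A first, cheap observation disposes of a degenerate case: the subspace $V_0=[\Lg,\Lg]$ satisfies $\{V_0,V_0\}=[V_0,V_0]_\Lg=V_0$, so if the representation $L$ were trivial on $\Ls=V_0$ then $V_0$ would be a perfect subalgebra of the nilpotent algebra $\Ln$, which is impossible for $n\ge 2$; hence $L$ is nontrivial on $\Ls$. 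After that, I would invoke the classification of pre-Lie structures on $\mathfrak{gl}_n(\C)$ from \cite{BU4} to control the $\Ls$-module type of $V$ (essentially a sum of copies of the standard module $\C^n$, its dual, and the trivial module), the discrepancy between $\ell|_\Ls$ and $L|_\Ls$ being a $1$-cocycle with values in $\Hom(V/W,W)$ which — since $H^1(\Ls,-)=0$ — can be normalized away and does not affect the module type. In particular $W$, being a central quotient of $\Lambda^2(V/W)$, is forced to consist of trivial constituents only.

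It then remains to contradict the existence of a nonzero $\Ls$-equivariant surjection $\Lambda^2(V/W)\to W$ onto a nonzero trivial module, with $V/W$ of the type above. This is exactly the circle of $\Lambda^2$-decomposition computations for standard and dual simple $\mathfrak{sl}_n(\C)$-modules carried out in \cite{BU74}, Section~$3$: for $n\ge 2$ neither $\Lambda^2(\C^n)$ nor $\Lambda^2((\C^n)^*)$ has a trivial summand, the only trivial constituents arise from mixed terms $\C^n\otimes(\C^n)^*$, and the remaining post-Lie axioms \eqref{post1}--\eqref{post3} relating $\{\,,\}$, $L$ and $[\,,]$ should obstruct the bracket from actually hitting such a term. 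This would give $W=0$, i.e.\ $\Ln$ abelian, contradicting the hypothesis, and finish the proof.

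I expect the main obstacle to be the middle step: reading off from \cite{BU4} the precise $\mathfrak{sl}_n(\C)$-module underlying an arbitrary pre-Lie structure on $\mathfrak{gl}_n(\C)$, dealing cleanly with the difference between the operators $\ell(x)$ and $L(x)$, and then matching everything against the wedge-product decompositions of \cite{BU74} so as to leave literally no room for a nonzero central commutator in $\Ln$. The reduction to a pre-Lie structure and the equivariance of $\{\,,\}$ are routine; the representation theory is where the work is.
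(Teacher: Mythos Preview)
Your reduction via Lemma~\ref{5.4} and the observation that $M_L$ and $M_\ell$ have the same $\Ls$-composition factors are correct and match the paper. The gap is in the module-theoretic input you invoke. You cite \cite{BU4} and conclude that the $\Ls$-module underlying a pre-Lie structure on $\mathfrak{gl}_n(\C)$ is ``a sum of copies of the standard module, its dual, and the trivial module''. The paper instead uses Theorem~4.5 of Baues \cite{BAU}, valid for $n\ge 3$, which gives the much sharper statement that $M_\ell$ is \emph{special} and isomorphic to $L(\om_1)^{\oplus n}$ (up to passing to the dual, one may assume only $L(\om_1)$ occurs). With this in hand the argument closes immediately: every irreducible constituent of $M_L$---in particular every constituent of $W=\{\Ln,\Ln\}$---is of type $L(\om_1)$, whereas $W$ is an $\Ls$-quotient of $\Lambda^2(V/W)$ with $V/W$ a sum of copies of $L(\om_1)$, and by the wedge decompositions of \cite{BU74} such a $\Lambda^2$ has only constituents $L(\om_2)$ and $L(2\om_1)$, never $L(\om_1)$. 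Hence $W=0$.

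By contrast, your weaker module type does \emph{not} force the conclusion. Your assertion that ``$W$ is forced to consist of trivial constituents only'' is not justified (centrality in $\Ln$ says nothing about the $\Ls$-action through $L$), and even if it were, mixed tensor pieces $L(\om_1)\otimes L(\om_1)^*$ in $\Lambda^2(V/W)$ do contain trivial summands, so one cannot rule out a nonzero $\Ls$-map onto a trivial $W$ by representation theory alone; this is precisely the hole you flag at the end. The fix is to replace the vague appeal to \cite{BU4} by Baues' theorem \cite{BAU}, which eliminates trivial and dual constituents from the outset. Note also that \cite{BAU} applies only for $n\ge 3$; the case $n=2$ has to be handled separately (in the paper this is Proposition~\ref{5.3}).
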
  

\begin{proof}
By Proposition $\ref{5.3}$ we may assume that $n\ge 3$. Assume that $x\cdot y=L(x)(y)$ is a
post-Lie algebra structure on $(\Lg,\Ln)$ and let $L\colon \mathfrak{gl}_n(\C)\ra \Der(\Ln)$
be the representation given by $x\mapsto L(x)$. Then the restriction to $\Ls=\mathfrak{sl}_n(\C)$ defines an $\Ls$-module $M_L$.
By Lemma $\ref{5.4}$ we obtain a $\Lg$-module, and then an $\Ls$-module $M_{\ell}$, by $\ell(x)=\frac{1}{2}\ad(x)+L(x)$.
This defines a pre-Lie algebra structure (or left-symmetric structure) on $\Lg$. \\
Now we can apply Theorem $4.5$ of \cite{BAU} for all $n\ge 3$. It implies that the $\Ls$-module $M_{\ell}$ is {\em special}
in the sense of \cite{BAU}, and equivalent to $M_n(\C)$. The module action here is given
by left multiplication of matrices, and $M_n(\C)$ is equivalent to $L(\om_1)^{\oplus n}$ and as well to $(L(\om_1)^*)^{\oplus n}$,
where $L(\om_1)$ denotes the $n$-dimensional natural $\Ls$-module and $L(\om_1)^*$ denotes its dual module.
We may assume that the only irreducible $\Ls$-submodule of  $M_{\ell}$ is of type $L(\om_1)$.
Since $\Ln$ is $2$-step nilpotent and $\{\Ln,\Ln\}$ is a characteristic subspace of $\Ln$, the formula
$\ell(x)=\frac{1}{2}\ad(x)+L(x)$ shows that $M_L$ and $M_{\ell}$ have the same irreducible $\Ls$-submodules.
So also $M_L$ has only irreducible submodules of type $L(\om_1)$. \\
On the other hand, $\{\Ln,\Ln\}$ is an $\Ls$-invariant submodule of $M_L$. By Weyl's theorem there exists an
$\Ls$-invariant complement $V=V_1\oplus \cdots \oplus V_k$, where $V_i$ is irreducible, with $\Ln=V\oplus \{\Ln,\Ln\}$.
So $V$ is a generating space for $\Ln$. Thus the $\Ls$-submodule $\{\Ln,\Ln\}$ is an $\Ls$-submodule of
\[
V\wedge V =(V_1\oplus \cdots \oplus V_k)\wedge (V_1\oplus \cdots \oplus V_k).
\]
By Lemma $3.4$ in \cite{BU74} we have
\[
V_1\wedge V_j=\begin{cases} L(\om_1)\otimes L(\om_1)\cong L(\om_2)\oplus L(2\om_1) & \text{ if } i\neq j,\\
 {\rm Sym}^2(L(\om_1))\cong L(2\om_1) & \text{ if } i=j.
 \end{cases}
\]
So $V\wedge V$ contains an irreducible $\Ls$-submodule $L(2\om_1)$, which is not of type $L(\om_1)$. It follows that either
$\{\Ln,\Ln\}=0$, which is a contradiction, or that the $\Ls$-submodule $\{\Ln,\Ln\}$ doesn't contain an irreducible $\Ls$-submodule
of type $L(\om_1)$ and hence cannot be an $\Ls$-submodule of $M_L$. This is also a contradiction.
\end{proof}

The method of this proof can also be applied to other pairs $(\Lg,\Ln)$, where $\Lg$ is reductive with $1$-dimensional center and
$\Ln$ is $2$-step nilpotent and non-abelian. However, for $\Ln$ being nilpotent of class $c\ge 3$ we do not know whether or not
there exists a post-Lie algebra structure on $(\Lg,\Ln)$. So we formulate an open question as follows:

\begin{qu}
Let $(\Lg,\Ln)$ be a pair of Lie algebras, where $\Lg$ is reductive non-abelian with $1$-dimensional 
center, and $\Ln$ is nilpotent non-abelian. Is it true that there are no post-Lie algebra structures 
on the pair $(\Lg,\Ln)$?
\end{qu}

\section*{Acknowledgments}
Dietrich Burde is supported by the Austrian Science Foun\-da\-tion FWF, grant I 3248 and grant 
P 33811. Mina Monadjem acknowledges support from the FWF grant P 33811. Karel Dekimpe is 
supported by a long term structural funding, the Methusalem grant of the Flemish Government.
We thank Wolfgang Moens for helpful discussions.

\end{document}